    \documentclass[11pt,a4paper]{article}
\usepackage[dvips]{graphics,graphicx}
\usepackage{amsfonts,amssymb,amsmath,color,mathrsfs, amstext}
\usepackage{amsbsy, amsopn, amscd, amsxtra, amsthm,authblk,enumerate,algorithmicx,algorithm}
\usepackage{bm,bbm}
\usepackage{algpseudocode}
\usepackage{upref}
\usepackage{geometry}
\usepackage{ulem}
\geometry{left=3.5cm,right=3.5cm,top=3cm,bottom=3cm}
\usepackage[displaymath]{lineno}
\usepackage{float}
\usepackage{yhmath}
\usepackage{amsmath}
\usepackage{amssymb}
\usepackage{amstext}
\usepackage{mathrsfs}
\usepackage{a4wide}
\usepackage{graphicx}
\usepackage{bbm}
\usepackage{verbatim}
\usepackage{bm}
\usepackage{relsize} 
\usepackage{hyperref}
\allowdisplaybreaks \numberwithin{equation}{section}
\usepackage{color}
\usepackage{cases}
\hypersetup{colorlinks=true}
\hypersetup{
    colorlinks=true,
    linkcolor=blue,
    filecolor=blue,
    urlcolor=blue,
    anchorcolor=black,
    citecolor={red},
}

\numberwithin{equation}{section}

\newtheorem{theorem}{Theorem}[section]

\newtheorem{lemma}[theorem]{Lemma}

\theoremstyle{definition}

\theoremstyle{remark}

\newcommand{\norm}[1]{\left\Vert#1\right\Vert}
\newcommand{\abs}[1]{\left\vert#1\right\vert}

\allowdisplaybreaks

\begin{document}

\title{\bf{Existence of stationary vortex patches for the \lowercase{g}SQG in bounded domains}\vspace{0.5cm}}

\author{\large{Vladimir Angulo-Castillo}\thanks{Department of Mathematics, National University of Colombia, Kilometro 9 vía a Caño Limón, Arauca, Colombia. E-mail: vlcastillo@unal.edu.co} , \large{ Edison Cuba}\thanks{IMECC-Department of Mathematics, University of Campinas, Rua S\'{e}rgio Buarque de Holanda, 651, 13083-859, Campinas, SP, Brazil. E-mail: ecubah@ime.unicamp.br} , \ and \large{ Lucas C. F. Ferreira}\thanks{IMECC-Department of Mathematics, University of Campinas, Rua S\'{e}rgio Buarque de Holanda, 651, 13083-859, Campinas, SP, Brazil. E-mail: lcff@ime.unicamp.br (corresponding author)}}
\date{}
\maketitle


\vspace{-0.7cm}

\begin{abstract}
	In this paper we  show the existence of time-periodic %
    vortex patches  for the genera\-lized surface quasi-geostrophic %
    equation   within a bounded domain. This construction is %
    carried out for values of $\gamma$ in the range of $(1,2)$. %
    The resulting vortex patches possess a fixed vorticity and %
    total flux, and they are located in the neighborhood of %
    critical points that are non-degenerate for the %
    Kirchhoff--Routh equation. The proof is accomplished through %
    a combination of analyzing the linearization of the %
    contour dynamics equation and employing the implicit %
    function theorem as well as carefully selected function spaces.

{\medskip\bigskip\noindent\textbf{2010 AMS MSC:}  35Q35;  76B47;  76B03}

{\medskip\noindent\textbf{Keywords:} gSQG equation; Vortex patches; Birkhoff-Rott operator; Critical points; Implicit Function Theorem}

\end{abstract}
	

\section{Introduction}
In this study, we delve into a category of active scalar systems that interact with an incompressible flow within a two-dimensional framework. To be precise, through an examination of the contour dynamics equation and the application of the implicit function theorem, we establish the presence of stationary vortex patches. These patches are characterized by a constant total flux and a fixed vorticity for each individual patch within the framework of the generalized surface quasi-geostrophic (gSQG) equation, which are defined within a bounded domain. In that sense, this model explains how the potential temperature $\omega$ evolves under the influence of the transport equation
\begin{equation}\label{1-1}
	\begin{cases}
		\partial_t\omega+\mathbf{v}\cdot \nabla \omega =0&\text{in}\ \Omega\times (0,T),\\
		\ \mathbf{v}=\nabla^\perp(-\Delta)^{-1+\frac{\gamma}{2}}\omega     &\text{in}\ \Omega\times (0,T),\\
		\omega\big|_{t=0}=\omega_0 &\text{in}\ \Omega,\\
	\end{cases}
\end{equation}
where $\Omega$ is a bounded domain in two-dimensional space, and we consider a parameter $\gamma$ satisfying the condition $0\leq\gamma<2$. The variable $\omega(\boldsymbol{x},t)$, defined for $\boldsymbol{x}$ within $\Omega$ and $t$ in the interval $(0, T)$, represents an active scalar being advected by a velocity field $\mathbf{v}(\boldsymbol{x},t)$. This velocity field is generated by $\omega$, and $\nabla^{\perp}=(\partial_{2},-\partial_{1})$. The operator denoted by $(-\Delta)^{-1+\frac{\gamma}{2}}$ is defined as
\begin{equation*}
(-\Delta)^{-1+\frac{\gamma}{2}}\omega(\boldsymbol{x})=\int_{\Omega} K_{\gamma}(\boldsymbol{x},\boldsymbol{y})\omega(\boldsymbol{y}),d\boldsymbol{y},
\end{equation*}
where the term $K_{\gamma}(\boldsymbol{x},\boldsymbol{y})$ represents the Green function associated with the fractional Laplacian in bounded domains with smooth boundaries. It is defined for each pair of points $\boldsymbol{x},\boldsymbol{y}\in\Omega$, where $\boldsymbol{x}\neq \boldsymbol{y}$, as follows:
\begin{equation*}
	K_\gamma(\boldsymbol{x},\boldsymbol{y})=
	\begin{cases}
		-\frac{1}{2\pi}\log |\boldsymbol{x}-\boldsymbol{y}| + K_0^0(\boldsymbol{x},\boldsymbol{y}),  \ \  & \ \ \gamma=0,\\
		\frac{C_{\gamma}}{|\boldsymbol{x}-\boldsymbol{y}|^{\gamma}} + K_\gamma^0(\boldsymbol{x},\boldsymbol{y}),\ \  & \ \ \gamma\in(0,2),
	\end{cases}
\end{equation*}
with $\Gamma(\cdot)$ being the Euler gamma function and $C_{\gamma}=\frac{2^{\gamma-1}\Gamma (\frac{\gamma}{2})}{ \Gamma(1-\frac{\gamma}{2})}$. Additionally, $K_\gamma^0$ belongs to the class of infinitely differentiable functions $C^\infty (\Omega\times \Omega)$, as discussed in \cite[Lemma 2.3]{Hmidi}. \medskip

We highlight that the system (\ref{1-1}) covers the cases of the 2D incompressible Euler equations by taking $\gamma=0$ and the inviscid SQG equations when considering $\gamma=1$. The system (\ref{1-1}) for $0<\gamma<2$ was initially introduced by Córdoba \textit{et al.} for the flat case $\mathbb{R}^{2}$ in their work \cite{Cord}. Over the past decade, it has garnered significant attention and scrutiny as it represents a generalization of both the Euler equation and the SQG equation. Notice that the case $\gamma=2$ produces stationary solutions. \medskip

The equation governing the evolution of the interface is defined by a periodic curve $z(x,t)$ with a $2\pi$ period, which adheres to the formula
\begin{equation*}
\partial_{t}z(x,t)=\frac{C_{\gamma }}{2\pi }\int_{0}^{2\pi }\frac{\partial_{y}z(y,t)}{|z(x,t)-z(y,t)|^{\gamma }}\mathrm{d}y,
\end{equation*}
known in the literature as the contour dynamics equation. However, this integral becomes divergent for $\alpha \in \lbrack 1,2)$. To address this singularity, the equation for interface evolution can be reformulated as
\begin{equation*}
\partial_{t}z(x,t)=\frac{C_{\gamma }}{2\pi }\int_{0}^{2\pi }\frac{\partial
_{y}z(y,t)-\partial _{y}z(x,t)}{|z(x,t)-z(y,t)|^{\gamma }}\mathrm{d}y.
\end{equation*}
This additional term in the integral does not alter the evolution of the patch, as we can include terms in the tangential direction.

It is known that the SQG equation ($\gamma=1$ in (\ref{1-1})) serves as a model for tracking the evolution of temperature within a broader quasi-geostrophic system applicable to atmospheric flows near the tropopause \cite{Held,Juckes} and  ocean flows in the upper layers \cite{Lape}, see also \cite{ConsMT,Lape2017}. Furthermore, the SQG equation has gained substantial attention as a simplified representation of the three dimensional Euler equation (see \cite{ConsMT}). \medskip

Active scalar equations have recently attracted attention mainly because they can shed more light on the problem of the duality between the presence of solutions exhibiting singularities within a finite time frame and the global well-posedness of classical solutions (see \cite{Castro,Chae12,He,Kiselev,Kiselev1} and references therein). \medskip

In the context of determining the local well-posedness of solutions across the entire plane, numerous studies have explored various function spaces for initial data. For instance, we can reference the research conducted by Chae \textit{et al.} \cite{Chae12}, which demonstrates the local existence of patch-type solutions within the framework of Sobolev spaces. However, the question of global well-posedness for classical solutions in the entire space remains unresolved, except for the special case of Euler equation when $\gamma=0$ in equation (\ref{1-1}), as discussed in \cite{Elgindi,Majda,Yud}. This is due to the fact that for the general case with $0<\gamma<2$ the velocity is known to exhibit singularity and does not fit into the Lipschitz class. The local well-posedness of the problem (\ref{1-1}) for classical solutions was originally established by Constantin \textit{et al.} \cite{ConsMT} and later extended to initial data with sufficient regularity (see \cite{Chae12,Gancedo,Kiselev1}). Similarly, solutions in different function spaces have been found to exist locally (see \cite{Chae12,Gancedo,Kiselev1}). Furthermore, the global existence of weak solutions is well-established, primarily due to the work of Lazar and Zue \cite{Lazar}, Marchand \cite{Marchand}, and Resnick \cite{Resnick}. In addition, the papers by Buckmaster and Isett in \cite{Buckmaster,Isett} explore the multiplicity of weak solutions to the problem (\ref{1-1}) with $\gamma=1$. \medskip

On the other hand, another category of solutions that garners significant attention in academic discussions involves $\gamma$-patches. These $\gamma$-patches are solutions to equation (\ref{1-1}) and are obtained by using initial data in the form $\omega_0=\boldsymbol \chi_{D}$, where $D$ is a bounded domain within the two-dimensional real space, and $\boldsymbol \chi_{D}$ represents the characteristic function of that domain. Then, due to the transport equation $\partial_t\omega+\mathbf{v}\cdot \nabla \omega =0$, the solution temporarily assumes the shape $\omega_t=\boldsymbol \chi_{D_t}$ for a brief period. In this scenario, the patch structure remains unchanged for a brief period, while the boundary undergoes evolution in accordance with the appropriate contour dynamics equations, as referenced in \cite{Chae12,Gancedo,Rodrigo}. Some results in this direction were shown for the half space \cite{Gancedo1,Kiselev,Kiselev1} and for smooth bounded domains \cite{Hmidi,Kiselev2}. However, it is important to note that the global persistence of boundary regularity over time is only known to hold for the case when $\gamma=0$. This result is established in Chemin's paper \cite{Chemin} and is also presented with an alternative proof in the paper by Bertozzi and Constantin \cite{Bertozzi}. The interest in these solutions is, in part, driven by numerical and experimental observations that strongly indicate the emergence of singularities within a finite time period, as reported in references \cite{Cord,Scott,Scott1}. In this direction, we observe that Hassainia and Hmidi in \cite{Hassainia} constructed the pioneering example of ``V-States'' as non-trivial instances of global solutions in the vortex patch category. These solutions were established for values of $\gamma$ within the range $[0,1)$, and their construction involved the utilization of contour dynamics equations and bifurcation theory. Subsequently, Castro \textit{et al.} \cite{Cas1} extended this result to encompass cases where $\gamma$ falls within the interval $[1,2)$, and their work also delved into the examination of boundary regularity. In relation to the existence of vortex patch type solutions for gSQG equation, there are several works that address the problem in different contexts (see, for example, \cite{Cas1,Castro1,Cord0,delaHoz,Garcia,Godard,Gomez,Hassainia1,Hmidi0,Renault}).  In particular, Castro et al. in \cite{Castro2}, accomplished the task of establishing the global existence of smooth solutions to the problem described in (\ref{1-1}) by employing a bifurcation argument centered around a radially symmetric function. Next, Gravejat and Smets \cite{Gravejat} investigated the presence of smooth traveling vortex pairs for the problem outlined in (\ref{1-1}) with $\gamma=1$, obtaining a result that, thanks to Godard-Cadillac in \cite{Godard0}, was extended to any $\gamma\in[0,2)$. For their part, Kiselev \textit{et al.} \cite{Kiselev} obtained a result on the singularity with multi-signed patches in finite time for (\ref{1-1}) with the condition $0<\gamma<\frac{1}{12}$, and later improved by Gancedo \textit{et al.} \cite{Gancedo1} for $0<\gamma<\frac{1}{3}$.
\medskip

A key point in the development of our problem (\ref{1-1}) is the analysis of the problem of desingularization of point vortices, which is related to the search for concentrated global solutions. In this context, the work of Hmidi and Mateu in \cite{Hmidi0} is notable as it utilizes the contour dynamics equation and the implicit function theorem. In their research, they detail the construction of pairs of traveling and co-rotating patches characterized by substantial vorticity strength and a limited area. Hmidi and Mateu's approach was primarily applied to desingularize various phenomena, including the Karman Vortex Street studied by García \cite{Garcia0}, the Thomson polygon analyzed by both García \cite{Garcia} and Cao \cite{cao}, the pair of asymmetric vortices investigated by Hassainia and Hmidi \cite{Hassainia00}, and general steady configurations involving a finite number of point vortices with generalized surface quasi-geostrophic interactions, as explored by Hassainia and Wheeler \cite{Hassainia1}. Moreover, using the Lyapunov-Schmidt reduction method Cao \textit{et al.} in \cite{Ao} proved the presence of smooth concentrated solutions that travel and rotate in the context of the gSQG equation. \medskip

The study of SQG-type equations defined on bounded smooth domains is, in part, much more complicated than in the $\mathbb{R}^{2}$ case, since there the Green function cannot be expressed explicitly. In the context of these systems, it is worth highlighting the pioneering contributions of Constantin and Ignatova, as outlined in \cite{Constantin,Constantin1}, for \eqref{1-1} with $\gamma=1$  with critical dissipation. Subsequently, in \cite{Constantin2}, the authors demonstrated that for the critical dissipative SQG equation in bounded domains, global regularity extending up to the boundary is established if and only if a particular quantitative criterion related to the scalar function becoming zero at the boundary is met. This condition is connected to the square root of the Dirichlet Laplacian dissipation. Following these developments, Ignatova, in the work presented in \cite{Ignatova}, constructs global bounded interior Lipschitz solutions starting from arbitrary large bounded interior Lipschitz initial data. Utilizing a method grounded in De Giorgi's techniques, the research establishes global H\"{o}lder regularity that extends all the way to the boundary of the solution, as discussed in \cite{Stokols}. In relation to the general case that concerns our paper, we highlight that Constantin and Nguyen \cite{ConstantinN1} constructed global weak solutions in $L^{2}$ for the SQG case $\gamma=1$, result that was then generalized for any $\gamma$ in the interval $(1,2)$ with more singular constitutive law in the velocity (see \cite{Nguyen}). Moreover, local well-posedness for classical solutions of the inviscid SQG equation within bounded and smooth domains was established in \cite{ConstantinN}. \medskip

The aforementioned findings reveal that the behavior of the gSQG equation within bounded domains is quite complex when it comes to well-posedness and the formation of singularities. In fact, a comprehensive understanding of these properties is yet to be fully established for the entire range of parameter values $\alpha \in (0,2)$. This situation naturally encourages further exploration of solutions that exhibit specific characteristics and unique dynamics. In this context, there are not many studies in domains other than the plane in a noteworthy category of solutions that evolve from initial measures, particularly those associated with active scalar equations.  In the case of these problems, Hmidi \textit{et al.} in \cite{Hmidi} proved for the gSQG equation (\ref{1-1}) the existence of the V-states with $\gamma\in(0,1)$ in the unit disc, and then, Cao \textit{et al.} in \cite{cao1} demonstrated the existence of stationary vortex patches. These patches maintained both fixed vorticity and a consistent total flux for each patch. They achieved this within the context of the SQG equation, considering a general bounded domain. Their approach involved a desingularization method applied to point vortices. \medskip

The purpose of this paper is to establish a collection of stationary vortex patches within a bounded domain for the gSQG with $\gamma\in(1,2)$, achieved through the desingularization of point vortices.  This construction depends on the requirement that the Kirchhoff-Routh function possesses non-degenerate critical points. To the best of our knowledge, this represents the first result concerning the presence of a set of stationary patches for the gSQG equation within a bounded domain in this more singular parameter range. The inclusion of such a parameter range for $\gamma$ presents us with a more intricate challenge in establishing existence of stationary vortex patches for the gSQG in bounded domains. Our proof relies on a careful selection of the functional spaces (namely, $X^{k+\gamma-1}$, $Y^{k}$ and $Y_0^{k}$, see Section \ref{sub-2.1}, p.8), where we employ the implicit function theorem to contour dynamics equations for stationary vortex patches, inspired by contributions of the previous works \cite{cao}, \cite{cao1}, \cite{Cas1}, \cite{Hassainia}, \cite{Hmidi}. Hence, in order to ensure the existence of stationary vortex patches for \eqref{1-1} via the implicit function theorem, we need to obtain some properties for the functional $G_i(\varepsilon, \boldsymbol{\rho}, \boldsymbol{x}, \boldsymbol{g})$  defined in \eqref{funct} below. For example, we need to verify its $C^1$-regularity and compute the linearized operator $\partial_{\boldsymbol{g}}\boldsymbol{G}(0, \boldsymbol{\rho}, \boldsymbol{x}, \mathbf{0})$. However, we encounter two issues. First, this operator lacks invertibility from $(X^{k+\gamma-1})^m$ to $(Y^{k-1})^m$, but it possesses invertibility from $(X^{k+\gamma-1})^m$ to $(Y_0^{k-1})^m$. Moreover, it is crucial to consider an appropriate segment for obtaining the Fourier series of the linearization of $\boldsymbol{G}$ around $(0,  \boldsymbol{\rho}, \boldsymbol{x}_0, \mathbf{0})$. This consideration stems from our utilization of a complete Fourier series within the functional space to demonstrate that the linearization constitutes an isomorphism, as delineated in Lemma \ref{iso}.

For a collection of $m$ real numbers $\kappa_1, \kappa_2, \ldots, \kappa_m$, we establish the Kirchhoff-Routh function on $\Omega^m$ in the following manner
\begin{equation}\label{1-5}
    \mathcal{W}_m(x_1,x_2,..,x_m)=-\sum\limits_{i\neq j}^m\kappa_i\kappa_j K^1_\gamma(x_i,x_j)+\sum\limits_{i=1}^m\kappa_i^2K_\gamma^0(x_i,x_i),
\end{equation}
where $\Omega^m$ is the set of vectors $\boldsymbol{x} = (x_1, x_2, \ldots, x_m)$ such that each $x_i$ belongs to the set $\Omega$ for $i = 1, 2, \ldots, m$ and $K^1_\gamma(x,y)=\frac{C_{\gamma}}{|\boldsymbol{x}-\boldsymbol{y}|^{\gamma}}$.\medskip

It is established in  \cite{Lin1, Lin2} that the positioning of $m$ point vortices with strengths $\kappa_i$ ($i=1,\ldots,m$) within the domain $\Omega$ requires that these vortices are located at a critical point of the Kirchhoff-Routh function $\mathcal{W}_m$ that constitutes a non-degenerate critical point of $\mathcal{W}_m$. We are going to proceed to construct a family of vortex patches for small values of $\varepsilon$. In this sense, we construct a family of time-periodic vortex patches for the gSQG. \medskip

Now, we are in a position to give our main result.
\begin{theorem}\label{thm1}
Consider a bounded domain $\Omega\subset \mathbb{R}^2$ with a smooth boundary and $m$ given positive values $\kappa_i$ ($i=1,\ldots, m)$. Assume that $\boldsymbol{x}_0=(x_{0,1},\ldots,x_{0,m})\in \Omega^m$, with $x_{0,i}\not=x_{0,j}$ for $i\not=j$, is an isolated critical point of $\mathcal{W}_m$ as defined in \eqref{1-5} and satisfies the nondegeneracy condition: $\text{deg} \left(\nabla \mathcal{W}_m, \boldsymbol{x}_0\right)\not=0$. Under these conditions, there exists  $\varepsilon_0>0$ such that, for all $0<\varepsilon<\varepsilon_0$, a stationary vortex patch solution $\omega_{\varepsilon}$ can be constructed, which exhibits the following characteristics:
	\begin{itemize}
		\item[$(i)$]  $\omega_{\varepsilon}=\sum_{i=1}^m \frac{1}{\varepsilon^2} \chi_{\Gamma_i}$  within specific domains  $\Gamma_i \subset \Omega, i=1, \ldots, m$.

        \item[$(ii)$]   The boundaries $\partial \Gamma_i$ for $i=1, \ldots, m$  can be defined using the subsequent parameterization
     $$\partial \Gamma_i=\left\{x_{\varepsilon, i}+\varepsilon\left(\sqrt{\frac{\kappa_i}{\pi}}+o(1)\right)(\cos \beta, \sin \beta) \mid \beta \in[0,2 \pi)\right\},
        $$
        where $x_{\varepsilon, i}=x_{0, i}+o(1)$ as $\varepsilon \rightarrow 0$.

        \item[$(iii)$] The total flux for each patch remains fixed as
        $$
        \frac{1}{\varepsilon^2}\left|\Gamma_i\right|=\kappa_i, \quad \forall i=1, \ldots, m.
        $$

        \item[$(iv)$] As $\varepsilon \rightarrow 0^+$, one has the following convergence in the sense of measures
        $$
        \omega_{\varepsilon} \rightarrow \sum_{i=1}^n \delta\left(x-x_{0, i}\right) \text { weakly, }
        $$
        where $\delta\left(x-x_{0, i}\right)$ represents the Dirac delta function concentrates at  the point $x_{0, i}$.

		\item[$(v)$] The interior of each domain $\Gamma_i$ is convex, for every $i=1,\ldots,m$.
	\end{itemize}
\end{theorem}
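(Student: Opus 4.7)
The plan is to recast the stationarity of a multi-patch configuration as a functional equation to which we can apply the implicit function theorem, in the spirit of \cite{Hmidi0, cao, cao1, Hmidi}. I parameterize each boundary by
\begin{equation*}
\partial \Gamma_i = \left\{ x_i + \varepsilon(\rho_i + g_i(\theta))(\cos\theta, \sin\theta) : \theta \in [0, 2\pi) \right\},
\end{equation*}
where $\rho_i = \sqrt{\kappa_i/\pi}$ is fixed so that the area constraint $|\Gamma_i|/\varepsilon^2 = \kappa_i + o(1)$ holds, $\boldsymbol{x} = (x_1, \ldots, x_m)$ collects the centers (to be chosen near $\boldsymbol{x}_0$), and $\boldsymbol{g} = (g_1, \ldots, g_m) \in (X^{k+\gamma-1})^m$ encodes small shape perturbations. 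A patch is stationary if and only if the normal component of the gSQG velocity vanishes on each $\partial \Gamma_i$; writing this out with the symmetrised contour dynamics kernel introduced above yields a system $G_i(\varepsilon, \boldsymbol{\rho}, \boldsymbol{x}, \boldsymbol{g}) = 0$, $i=1,\ldots,m$, of the form \eqref{funct}, which is the object to which I would apply the implicit function theorem.

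\medskip

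The core analytic work is to establish that $\boldsymbol{G}$ is $C^1$ in $\boldsymbol{g}$ and that its linearization $\mathcal{L}_{\boldsymbol{\rho}} := \partial_{\boldsymbol{g}} \boldsymbol{G}(0, \boldsymbol{\rho}, \boldsymbol{x}, \boldsymbol{0})$ is an isomorphism between the right spaces. At $\varepsilon = 0$ the problem decouples: only the self-interaction from the leading singular part $C_\gamma |x-y|^{-\gamma}$ of the Green kernel contributes (the smooth remainder $K_\gamma^0$ and the cross-interactions vanish), and on a single disc a Fourier expansion in $\theta$ diagonalises $\mathcal{L}_{\boldsymbol{\rho}}$. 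The resulting symbol on the $n$-th mode is an explicit constant $\lambda_n$ that annihilates the zero mode (a constant $g_i$ just enlarges the disc, which remains stationary) and grows like $n^{\gamma - 1}$ otherwise. Consequently $\mathcal{L}_{\boldsymbol{\rho}}$ maps $(X^{k+\gamma-1})^m$ into the zero-mean subspace $(Y_0^{k-1})^m$ and is an isomorphism onto it, which is Lemma~\ref{iso}. This is the main obstacle of the proof: for $\gamma \in (1,2)$ the kernel is more singular than in the Euler or SQG regimes, so the symmetrisation plus an integration by parts is needed to make $\boldsymbol{G}$ well-defined and $C^1$, and the loss of regularity $\gamma - 1$ between domain and target spaces must be tracked precisely — this explains the otherwise unusual choice of scales $X^{k+\gamma-1} \to Y_0^{k-1}$.

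\medskip

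With $C^1$-regularity and Lemma~\ref{iso} in hand, the implicit function theorem produces a branch $\boldsymbol{g} = \boldsymbol{g}(\varepsilon, \boldsymbol{x})$ with $\boldsymbol{g} \to \boldsymbol{0}$ as $\varepsilon \to 0$, solving the zero-mean part of $\boldsymbol{G} = \boldsymbol{0}$ uniformly in $\boldsymbol{x}$ near $\boldsymbol{x}_0$. The residual $m$ scalar equations — the means in $\theta$ of the $G_i$ — encode the translation degrees of freedom, and a careful expansion of the singular and the smooth parts of the Green function as $\varepsilon \to 0$ gives
\begin{equation*}
\nabla_{x_i} \mathcal{W}_m(\boldsymbol{x}) + o(1) = 0, \qquad i = 1, \ldots, m.
\end{equation*}
Since $\text{deg}(\nabla \mathcal{W}_m, \boldsymbol{x}_0) \neq 0$, stability of topological degree under $C^0$-perturbations yields centers $\boldsymbol{x}_\varepsilon \to \boldsymbol{x}_0$ solving the full system, and the associated $\boldsymbol{g}(\varepsilon, \boldsymbol{x}_\varepsilon) \to \boldsymbol{0}$ defines $\omega_\varepsilon$. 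Properties (i)--(iv) are then immediate from the ansatz and from $|\Gamma_i| \sim \pi(\varepsilon \rho_i)^2 = \varepsilon^2 \kappa_i$ combined with concentration at $x_{0,i}$. Convexity (v) follows by noting that the IFT branch satisfies $\|g_i\|_{C^2} = o(1)$, so the signed curvature of $\partial\Gamma_i$ equals $(\varepsilon \rho_i)^{-1}(1 + o(1)) > 0$ for small $\varepsilon$.
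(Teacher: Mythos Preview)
Your overall architecture---reduce to a contour-dynamics functional, establish $C^1$ regularity, invert the linearization via its Fourier symbol, and then close the finite-dimensional residual system by the degree assumption on $\nabla\mathcal W_m$---is exactly the paper's route. However, two concrete misidentifications would make the argument fail as written.

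First, the degenerate direction of the linearization is the \emph{first} Fourier mode $(\cos\beta,\sin\beta)$, not the zero mode. In Lemma~\ref{iso} one has $\sigma_1=0$ while $\sigma_j\neq 0$ for $j\ge 2$; geometrically this is the translation invariance of a disc, not dilation. Accordingly, the target space $Y_0^{k-1}$ is defined by vanishing \emph{first} Fourier coefficients, not by zero mean, and the domain $X^{k+\gamma-1}$ already excludes both the constant and the first modes. If you set things up as you describe---removing only the constant mode---then $\mathcal L_{\boldsymbol\rho}$ is neither injective (kernel on $j=1$) nor surjective onto your ``zero-mean'' target, and the implicit function theorem does not apply. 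The residual system is therefore $2m$ scalar equations (the coefficients of $\sin\beta$ and $\cos\beta$ in each $G_i$, cf.\ \eqref{ff}--\eqref{ffff}), not the $m$ ``means in $\theta$'' you wrote; this is precisely what matches the $2m$-dimensional equation $\nabla\mathcal W_m(\boldsymbol x)=O(\varepsilon)$ for the centers.

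Second, fixing $\rho_i=\sqrt{\kappa_i/\pi}$ gives only $|\Gamma_i|/\varepsilon^2=\kappa_i+o(1)$, not the exact flux constraint~(iii). The paper closes this by letting $\rho_i=\rho_i(\varepsilon,g_i)$ solve $\pi\rho_i^2+\tfrac{\varepsilon^{2+2\gamma}}{2}\int_0^{2\pi}g_i^2\,d\beta=\kappa_i$ (Lemma~\ref{r}) before applying the implicit function theorem in $\boldsymbol g$; since constants are already excluded from $X^{k+\gamma-1}$, you cannot absorb this correction into the zero mode of $g_i$ either.
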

The reminder of the paper is organized as follows. Section \ref{2} focuses on deriving the functional linked to contour dynamics equations when dealing with stationary vortex patches for \eqref{1-1}. Furthermore, we introduce the function spaces that will be utilized in subsequent sections. In Section \ref{section3}, we expand the functional and demonstrate that it possesses $C^1$ regularity.  Section \ref{section4} is dedicated to analyze the linearization of the functional introduced in Section \ref{2}. Moreover, we prove that the linearized operator is in fact an isomorphism. In Section \ref{section5},  we investigate the  existence of stationary vortex patches for \eqref{1-1} when $\gamma\in(1,2)$.

    Throughout this paper, we use the symbol $C$ to represent constants, which may vary from one line to another. Additionally, we use the notation $\mathcal{R}_{i,j}$ to denote a general bounded and $C^1$ continuous function, which may also vary from line to line. Furthermore, we will adopt the following notation to represent the mean value of the integral of $g$ over the unit circle
\begin{equation*}
    \int\!\!\!\!\!\!\!\!\!\; {}-{} g(\tau)d\tau:=\frac{1}{2\pi}\int_0^{2\pi}g(\tau)d\tau.
\end{equation*}

\section{Formulation and functional setting}\label{2}

In this section, we will focus on the formulation of the vortex patch motion to the gSQG and construction of the functional that is connected to the contour dynamics equations for stationary patches. We will carefully choose the positions of each $x_{\varepsilon,i}$ to ensure that the linearization operator is invertible within certain functional spaces. In this
setting, the solution takes at least for a short time the form $\omega(t) = \chi_{\Gamma_i}$ , where $\Gamma_i \subset \Omega$
is smooth and will be chosen close to the unit disc $\mathbb{D}$. To set the stage for our approach, we provide some context for the problem at hand. Consider a small constant $0<\rho_0<\frac{1}{4} \min _{i \neq j}\left\{\left|x_{0, i}-x_{0, j}\right|\right\}$.

 Given that our results primarily focus on patches located near a unit disk $\mathbb{D}$, we can make the assumption that the boundary of each patch, denoted as $\partial \Gamma_i$, can be parameterized by a $2\pi$ periodic curve as follows
\begin{equation*}
    z_i(\beta)=x_i+\varepsilon R_i(\beta)(\cos \beta, \sin \beta)
\end{equation*}
with $R_i(\beta)=\rho_i+\varepsilon^{1+\gamma} g_i(\beta), \beta \in[0,2 \pi)$ and $x_i \in B_{\rho_0}\left(x_{0, i}\right)$.

We now seek stationary patch solutions of \eqref{1-1} which satisfy the initial data
$$\omega_{\varepsilon}=\sum_{i=1}^m \frac{1}{\varepsilon^2} \chi_{\Gamma_i},$$ we notice that the total flux of each patch is approximately $\pi \rho_i^2$, with a small correction denoted as $o_{\varepsilon}(1)$. To make sure that the total flux of each patch matches a specific value $\kappa_i$, we need to adjust the values of $\rho_i$ near $\sqrt{\frac{\kappa_i}{\pi}}$. The equations governing this stationary patch solution $\omega_{\varepsilon}=\sum_{i=1}^m \frac{1}{\varepsilon^2} \chi_{\Gamma_i}$ can be written as
\begin{equation*}
    \boldsymbol{v}\left(z_i(\beta)\right) \cdot \boldsymbol{n}\left(z_i(\beta)\right)=0, \quad \forall i=1, \ldots, m.
\end{equation*}
In this context, $\boldsymbol{n}$ denotes the normal vector to the boundary, which is orthogonal to $z_i^{\prime}(\beta)$. The velocity can be obtained from the vorticity by applying the Biot-Savart law and the Green-Stokes formula. More precisely,
\begin{equation*}
    \begin{aligned}
        \boldsymbol{v}\left(z_i(\beta)\right)= & \frac{C_\gamma}{2 \pi \varepsilon^2} \int_0^{2 \pi}   \frac{z_i^{\prime}(\eta)-z_i^{\prime}(\beta)}{\left|z_i(\beta)-z_i(\eta)\right|^\gamma }d   \eta+\sum_{j \neq i}\frac{C_\gamma}{2 \pi \varepsilon^2} \int_0^{2 \pi} \frac{z_j^{\prime}(\eta)} {\left|z_i(\beta)- z_j(\eta)\right|^\gamma} d \eta \\
        & -\sum_{j=1}^m \int_0^{2 \pi} \nabla_x K_\gamma^0\left(z_i(\beta), x_j+\varepsilon \vartheta\cos \eta, \sin \eta)\right)\vartheta d\vartheta d\eta .
    \end{aligned}
\end{equation*}
We define $I^m:=\prod_{i=1}^m\left(\sqrt{\frac{\kappa_i}{\pi}}-\zeta, \sqrt{\frac{\kappa_i}{\pi}}+\zeta\right)$, where $\zeta>0$ is a small constant. For the sake of convenience, we will adopt the following notations
\begin{equation*}
	\begin{split}
		&A(\beta, \eta):=4\sin^2\left(\frac{\beta-\eta}{2}\right),\,\,\,A_{ij}=|x_{i}-x_{j}|^2,\\
		&{\resizebox{.95\hsize}{!}{$B(r,f,\beta, \eta):=4r(g(\beta)+g(\eta))\sin^2\left(\frac{\beta-\eta}{2}\right)+\varepsilon|\varepsilon|^{\gamma}\left((g(\beta)-g(\eta))^2+4 g(\beta)g(\eta)\sin^2\left(\frac{\beta-\eta}{2}\right)\right),$}}\\
		&{\resizebox{.95\hsize}{!}{$B_{ij}(\beta, \eta):=2(x_{i}-x_{j})\cdot(\rho_i(\cos\beta, \sin\beta)-\rho_j(\cos\eta, \sin\eta))+2\varepsilon|\varepsilon|^{\gamma}(x_{i}-x_{j})\cdot\left(g_i(\beta)(\cos\beta, \sin\beta)\right.$}}\\
		&\quad\,\left.-g_{j}(\eta)(\cos\eta, \sin\eta)\right)+\varepsilon((\rho_i+\varepsilon|\varepsilon|^{\gamma} g_i(\beta))(\cos\beta, \sin\beta)-(\rho_j+\varepsilon|\varepsilon|^{\gamma} g_j(\eta))(\cos\eta, \sin\eta))^2.
	\end{split}
\end{equation*}
For $\boldsymbol{\rho}\in I^m$, $\boldsymbol{x}\in \Omega^m$ and $\boldsymbol{g}=\left(g_1, \ldots, g_m\right)$. The task of locating a stationary global vortex patch solution for the generalized quasi-geostrophic equation \eqref{1-1} is equivalent to identifying a root of the functional  $G_i(\varepsilon, \boldsymbol{\rho}, \boldsymbol{x}, \boldsymbol{g})$, where
\begin{equation}\label{funct}
    G_i(\varepsilon, \boldsymbol{\rho}, \boldsymbol{x}, \boldsymbol{g})=G_{i, 1}\left(\varepsilon, \rho_i, g_i\right)+G_{i, 2}(\varepsilon, \boldsymbol{\rho}, \boldsymbol{x}, \boldsymbol{g})+G_{i, 3}(\varepsilon, \boldsymbol{\rho}, \boldsymbol{x}, \boldsymbol{g})=0, \quad \forall i=1, \ldots, m,
\end{equation}
and the terms $G_{i, j}\left(\varepsilon, \rho_i, g_i\right)$  for $j=1,2,3$ are given by
\begin{equation*}
    \begin{aligned}
        & G_{i, 1}\left(\varepsilon, \rho_i, g_i\right)=\frac{C_\gamma}{2 \pi \varepsilon|\varepsilon|^{\gamma} R_i(\beta)}  \times \\
        & \int_0^{2 \pi} \frac{\left(\left(R_i(\beta) R_i(\eta)+R_i^{\prime}(\beta) R_i^{\prime}(\eta)\right)\sin(\beta-\eta)+\left(R_i(\beta) R_i^{\prime}(\eta)-R_i^{\prime}(\beta) R_i(\eta)\right) \cos(\beta-\eta)\right) d \eta}{\left(\rho_i^2 A(\beta, \eta)+\varepsilon|\varepsilon|^\gamma B\left(\rho_i,  g_i, \beta,\eta\right)\right)^{\frac{\gamma}{2}}},
    \end{aligned}
\end{equation*}
\begin{equation*}
    \begin{aligned}
        & G_{i, 2}(\varepsilon, \boldsymbol{\rho}, \boldsymbol{x}, \boldsymbol{g}) =\frac{C_\gamma}{2 \pi \varepsilon R_i(\beta)} \sum_{j \neq i} \\
        & \int_0^{2 \pi} \frac{\left(\left(R_i(\beta) \rho_j(\eta)+R_i^{\prime}(\beta) \rho_j^{\prime}(\eta)\right)\sin(\beta-\eta)+\left(R_i(\beta) \rho_j^{\prime}(\eta)-R_i^{\prime}(\beta) \rho_j(\eta)\right) \cos(\beta-\eta)\right) d \eta}{\left(A_{i, j}+\varepsilon B_{i, j}(\beta, \eta)\right)^{\frac{\gamma}{2}}},
    \end{aligned}
\end{equation*}
and
\begin{equation*}
    \begin{aligned}
        & G_{i,3}(\varepsilon, \boldsymbol{\rho}, \boldsymbol{x}, \boldsymbol{g}) \\
        & =-\frac{1}{\varepsilon R_i(\beta)} \sum_{j=1}^m \int_0^{2 \pi} \int_0^{R_j(\eta)} z_i^{\prime}(\beta)\cdot \nabla_x K_\gamma^0\left(z_i(\beta), x_j+\varepsilon \vartheta\cos \eta, \sin \eta)\right)\vartheta d\vartheta d\eta.
    \end{aligned}
\end{equation*}
For $2 \pi$-periodic function $g_i$  in $X^{k+\gamma-1}$, we have
\begin{equation*}
    \begin{split}
        \int_{0}^{2\pi}g_{i}(\beta)\, d\beta =& \int_{0}^{2\pi}\sum_{j=2}^{\infty}(a_{j}\cos{(j\beta)}+b_{j}\sin{(j\beta)})\,d\beta\\
        =& \sum_{j=2}^{\infty}\Bigl(a_{j}\int_{0}^{2\pi}\cos{(j\beta)}\,d\beta + b_{j}\int_{0}^{2\pi}\sin{(j\beta)}\,d\beta\Bigr)=0
    \end{split}
\end{equation*}
and this implies that the requirement for the total flux in Theorem \ref{thm1} (iii) can be expressed equivalently as follows
\begin{equation*}
    \kappa_i=\frac{\left|\Gamma_i\right|}{\varepsilon^2}=\frac{1}{2} \int_0^{2 \pi}\left(\rho_i+\varepsilon|\varepsilon|^{\gamma} g_i(\beta)\right)^2 d \beta=\pi \rho_i^2+\frac{\varepsilon^{2+2\gamma}}{2} \int_0^{2 \pi} g_i^2(\beta) d \beta,
\end{equation*}
implying
\begin{equation}\label{Gi_eq}
    F_i\left(\varepsilon, \rho_i, g_i\right):=\pi \rho_i^2+\frac{\varepsilon^{2+2\gamma}}{2} \int_0^{2 \pi} g_i^2(\beta) d \beta-\kappa_i=0.
\end{equation}

In the preceding decomposition, we identify two distinct terms: the first terms, $G_{i,1}$ and $G_{i,2}$ corresponds to the functional observed in flat space  $\mathbb{R}^2$, delineating the induced patch effect, as described in \cite{Cas1}. The last term, $G_{i,3}$ characterizes the impact of a rigid boundary on the patch.

\subsection{Functional spaces} \label{sub-2.1}
In order to apply the implicit function theorem, we need
first to fix the function spaces and check the regularity of the functional $G_i$ introduced in \eqref{funct} with respect to these spaces. Hence, for $k\geq 3$, in our proofs, we will often make use of the function spaces defined as follows, with their norms naturally defined as norms of product spaces
\begin{equation*}
    \begin{aligned}
        & X^{k+\gamma-1 } \\
        = & \left\{g \in H^k\colon g(\beta)=\sum_{j=2}^{\infty}\left(a_j \cos (j \beta)+b_j \sin (j       \beta)\right),\left\|\int_0^{2 \pi} \frac{\partial^k g(\beta-\eta)-\partial^k g(\beta)}{\left|\sin  \left(\frac{\eta}{2}\right)\right|^{\gamma}} d \eta\right\|_{L^2}<\infty\right\},
    \end{aligned}
\end{equation*}
with the norm
\begin{equation*}
    \|g\|_{X^{k+\gamma-1 }}=\|g\|_{H^k}+\left\|\int_0^{2 \pi} \frac{\partial^k g(\beta-\eta)-\partial^k g(\beta)}{\left|\sin \left(\frac{\eta}{2}\right)\right|^{\gamma}} d \eta\right\|_{L^2},
\end{equation*}
and
\begin{equation*}
    Y^k=\left\{g \in H^k, g(\beta)=\sum_{j=1}^{\infty}\left(a_j \sin (j \beta)+b_j \cos (j \beta)\right)\right\}
\end{equation*}
and
\begin{equation*}
    Y_0^k=Y^k / \operatorname{span}\{\sin (\beta), \cos (\beta)\}=\left\{g \in H^k, g(\beta)=\sum_{j=2}^{\infty}\left(a_j \sin (j \beta)+b_j \cos (j \beta)\right)\right\},
\end{equation*}
with the standard $H^k$-norm.

\section{Extension and regularity of functionals}\label{section3}

In order to apply the implicit function theorem at $\varepsilon=0$, it is necessary to extend the functions $G_{i,1}$ and $G_{i,2}$, which were defined in Section \ref{2}, to the domain where $\varepsilon\leq 0$, ensuring that they maintain $C^1$ regularity.

Denote $V_{\mu}$ as the open neighborhood of zero in $(X^{k}_{k+\gamma-1})^m$ defined by
\begin{equation*}
	V_\mu:=\left\{\boldsymbol{g}=(g_1,\cdots,g_m)\in (X^{k}_{k+\gamma-1})^m: \ \sum_{j=1}^m\|g\|_{X^{k+\gamma-1}}<\mu\right\}
\end{equation*}
with $0<\mu<1$ and $k\ge 3$.  Let also  $B_{\rho_0}\left(\boldsymbol{x}_0\right)$ be the ball centered at $\boldsymbol{x}_0$ in $\Omega^m$ with a small radius $0<\rho_0<\frac{1}{4} \min _{i \neq j}\left\{\left|x_{0, i}-x_{0, j}\right|\right\}$. The initial step involves verifying the continuity of $G_i$.

\begin{lemma}\label{p3-1}
	There exists $\varepsilon_0>0$ such that for each $i$, the functional $G_i$ can be extended from $\left(-\varepsilon_0, \varepsilon_0\right) \times I^m \times B_{\rho_0}\left(x_0\right) \times V_\mu$ to $\left(Y^{k-1}\right)^m$ as  continuous functional.
\end{lemma}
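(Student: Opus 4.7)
The plan is to decompose $G_i = G_{i,1} + G_{i,2} + G_{i,3}$ and extend each piece separately across $\varepsilon = 0$. The main obstacle is the singular prefactors $\frac{1}{\varepsilon|\varepsilon|^{\gamma}}$ in $G_{i,1}$ and $\frac{1}{\varepsilon}$ in $G_{i,2}$: one must show that the integrals themselves vanish to matching order in $\varepsilon$ and extract the true leading coefficient. Throughout, I would use the auxiliary variable $h := \varepsilon|\varepsilon|^{\gamma}$, which is an odd $C^{1}$ function of $\varepsilon$ for $\gamma\in(1,2)$, so natural extensions through $\varepsilon=0$ arise after Taylor expansion in $h$.

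For $G_{i,1}$, I would expand numerator and denominator in powers of $h$. At $h=0$ the integrand reduces to $\rho_i^{2-\gamma}\sin(\beta-\eta)/|2\sin((\beta-\eta)/2)|^{\gamma}$, which is odd in $\beta-\eta$, so its $\eta$-integral over $[0,2\pi]$ vanishes identically in $\beta$. The next-order coefficient in $h$ yields a convergent integral (using $\gamma\in(1,2)$ and the fact that $B(\rho_i,g_i,\beta,\eta)$ vanishes to second order at $\eta=\beta$ to compensate the $A^{-(\gamma+2)/2}$-type singularity), and dividing by $h$ produces the sought extension at $\varepsilon=0$. One also has to check that the denominator $\rho_i^{2}A(\beta,\eta)+hB(\rho_i,g_i,\beta,\eta)$ remains strictly positive for $\varepsilon_0$ small: since the quotient $B/A$ is uniformly bounded when $g_i\in V_{\mu}$, the perturbation $hB/(\rho_i^{2}A)$ is controlled by $|h|(1+\|g_i\|_{X^{k+\gamma-1}}^{2})$, which can be made as small as we wish. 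For $G_{i,2}$, the situation is simpler because the separation $|x_i-x_j|\geq 2\rho_0$ for $\boldsymbol{x}\in B_{\rho_0}(\boldsymbol{x}_0)$ keeps $A_{ij}$ bounded away from zero, making the integrand smooth in $(\varepsilon,\beta,\eta)$. A Taylor expansion in $\varepsilon$ isolates the $\varepsilon^{0}$-term $\rho_i\rho_j\sin(\beta-\eta)/A_{ij}^{\gamma/2}$, whose $\eta$-integral vanishes by periodicity, so division by $\varepsilon$ gives the extension. For $G_{i,3}$, the $1/\varepsilon$ prefactor is cancelled by the explicit $\varepsilon$ factor in $z_i'(\beta)$, and since $K_{\gamma}^{0}\in C^{\infty}(\Omega\times\Omega)$ the resulting expression is smooth in all arguments in a neighbourhood of $\varepsilon=0$.

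Continuity as a map into $(Y^{k-1})^m$ then follows from dominated convergence, the uniform bound $\|g\|_{X^{k+\gamma-1}}<\mu$ on $V_{\mu}$, and the Sobolev embedding $X^{k+\gamma-1}\hookrightarrow C^{k-1}$ valid for $k\geq 3$. The image actually lies in $Y^{k-1}$ because the only $\beta$-dependence of the numerators in $G_{i,1}$ and $G_{i,2}$ enters through $\sin(\beta-\eta)$, $\cos(\beta-\eta)$ and the trace values $R_i(\beta)$, $R_i'(\beta)$, which after integration against even factors in $\eta-\beta$ contribute no $\beta$-independent mode. The hardest step is the $G_{i,1}$ analysis: the cancellation at $h=0$ has to be performed inside the integral before invoking the Taylor remainder estimate, since a termwise separation produces non-integrable singularities; it is precisely the singular-integral seminorm built into the definition of $X^{k+\gamma-1}$ that allows one to control the remainder uniformly and conclude continuity.
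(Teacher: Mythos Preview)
Your proposal is correct and follows essentially the same approach as the paper: Taylor-expand the kernels (the paper uses the integral-remainder form $\frac{1}{(A+B)^s}=\frac{1}{A^s}-s\int_0^1\frac{B}{(A+tB)^{1+s}}\,dt$) to cancel the $\frac{1}{\varepsilon|\varepsilon|^\gamma}$ and $\frac{1}{\varepsilon}$ prefactors via the odd/periodic vanishing of the zeroth-order term, then control the remainder in $H^{k-1}$ using the singular-integral seminorm built into $X^{k+\gamma-1}$, while treating $G_{i,2}$ and $G_{i,3}$ by smoothness. The paper additionally splits $G_{i,1}$ into four sub-pieces $G_{i11},\ldots,G_{i14}$ and singles out $G_{i12}$ as the term where the $X^{k+\gamma-1}$ seminorm is actually needed, and it records the explicit leading-order expansion of each $G_{i,l}$ for later use, but these are refinements of the same argument you outline.
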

\begin{proof}
The proof of continuity  for the term $G_{i, 1}$  that we shall present here is more or less classical and is analogous to \cite{cao,Cas1}, and we shall provide a complete proof for the self-containing of the paper. Consequently, we can decompose the functional
 $G_{i, 1}$ as follows
\begin{equation}\label{2-2}
	\begin{split}
		G_{i, 1}=&\frac{C_\gamma}{\varepsilon|\varepsilon|^{\gamma}}\int\!\!\!\!\!\!\!\!\!\; {}-{} \frac{(\rho_i+\varepsilon|\varepsilon|^\gamma g_i(\eta))\sin(\beta-\eta)d\eta}{\left(\rho_i^2 A(\beta, \eta)+\varepsilon|\varepsilon|^\gamma B\left(\rho_i, g_i, \beta, \eta\right)\right)^{\frac{\gamma}{2}}}
  +C_\gamma\int\!\!\!\!\!\!\!\!\!\; {}-{} \frac{(g'_i(\eta)-g'_i(\beta))\cos(\beta-\eta)d\eta}{\left(\rho_i^2 A(\beta, \eta)+\varepsilon|\varepsilon|^\gamma B\left(\rho_i, g_i, \beta, \eta\right)\right)^{\frac{\gamma}{2}}}\\		
		&
  +\frac{C_\gamma \varepsilon |\varepsilon|^\gamma g'_i(\beta)}{\rho_i+\varepsilon|\varepsilon|^\gamma g_i(\beta)}\int\!\!\!\!\!\!\!\!\!\; {}-{} \frac{(g_i(\beta)-g_i(\eta))\cos(\beta-\eta)d\eta}{\left(\rho_i^2 A(\beta, \eta)+\varepsilon|\varepsilon|^\gamma B\left(\rho_i, g_i, \beta, \eta\right)\right)^{\frac{\gamma}{2}}}\\	
		&
  +\frac{C_\gamma\varepsilon|\varepsilon|^\gamma}{\rho_i+\varepsilon|\varepsilon|^\gamma g_i(\beta)}\int\!\!\!\!\!\!\!\!\!\; {}-{} \frac{g'_i(\beta)g'_i(\eta)\sin(\beta-\eta)d\eta}{\left(\rho_i^2 A(\beta, \eta)+\varepsilon|\varepsilon|^\gamma B\left(\rho_i, g_i, \beta, \eta\right)\right)^{\frac{\gamma}{2}}}\\	
		=&G_{i11}+G_{i12}+G_{i13}+G_{i14}.
	\end{split}
\end{equation}
Specifically, the most singular component in \eqref{2-2}  is given by $G_{i11}$. To prove the continuity  to this term we proceed as follows
\begin{equation*}
    G_{i11}:=\frac{C_\gamma}{\varepsilon|\varepsilon|^{\gamma}}\int\!\!\!\!\!\!\!\!\!\; {}-{} \frac{(\rho_i+\varepsilon|\varepsilon|^{\gamma} g_i(\eta))\sin(\beta-\eta)}{\left(\rho_i^2 A(\beta, \eta)+\varepsilon|\varepsilon|^{\gamma} B\left(\rho_i, g_i, \beta, \eta\right)\right)^{\frac{\gamma}{2}}} d\eta,
\end{equation*}
since $\rho_i(\beta)=\rho_i+\varepsilon|\varepsilon|^{\gamma} g_i(\beta), \beta \in[0,2 \pi)$ and $x_i \in B_{\rho_0}\left(x_{0, i}\right)$, the possible singularity caused by $\varepsilon=0$ may occur only when we take zero-th order derivative of $G_{i,11}$. Throughout the proof, we will frequently use the following Taylor's formula
\begin{equation}\label{taylor}
	\frac{1}{(A+B)^s}=\frac{1}{A^s}-s\int_0^1\frac{B}{(A+tB)^{1+s}}dt.
\end{equation}
Then, we decompose the kernel into two parts
\begin{equation}\label{2-6}
	\begin{split}
		G_{i11}&=\frac{C_{\gamma}}{\varepsilon|\varepsilon|^{\gamma}}\int\!\!\!\!\!\!\!\!\!\; {}-{}\frac{\rho_i\sin{(\beta-\eta)}d\eta}{\left(\rho_i^{2} A(\beta,\eta)+\varepsilon|\varepsilon|^{\gamma}B\left(\rho_i,G_{i},\beta,\eta\right)\right)^{\frac{\gamma}{2}}}+C_\gamma\int\!\!\!\!\!\!\!\!\!\;{}-{} \frac{g_{i}(\eta)\sin{(\beta-\eta)}d\eta}{\left(\rho_i^{2} A(\beta,\eta)+\varepsilon|\varepsilon|^{\gamma} B\left(\rho_i, G_{i}, \beta, \eta\right)\right)^{\frac{\gamma}{2}}}\\
		&=\frac{C_\gamma \rho_i}{\varepsilon|\varepsilon|^{\gamma}\rho_i^\gamma}\int\!\!\!\!\!\!\!\!\!\; {}-{} \frac{\sin(\beta-\eta)d\eta}{ A^{\frac{\gamma}{2}}}-\frac{\gamma C_\gamma  \rho_i}{2}\int\!\!\!\!\!\!\!\!\!\; {}-{} \int_0^1 \frac{B\sin(\beta-\eta)dt d\eta}{\left(\rho_i^2 A(\beta, \eta)+t\varepsilon|\varepsilon|^\gamma B\left(\rho_i, g_i, \beta, \eta\right)\right)^{\frac{\gamma+2}{2}}}\\
        &\qquad \qquad+ C_{\gamma}\int\!\!\!\!\!\!\!\!\!\; {}-{} \frac{g_i(\eta)\sin{(\beta-\eta)}d\eta}{\left(\rho_i^{2} A(\beta,\eta)+\varepsilon|\varepsilon|^{\gamma} B\left(\rho_i,G_{i},\beta,\eta\right)\right)^{\frac{\gamma}{2}}}\\
		&=-\frac{\gamma C_\gamma \rho_i}{2}\int\!\!\!\!\!\!\!\!\!\; {}-{} \int_0^1 \frac{B\sin(\beta-\eta)dt d\eta}{(\rho_i^{2}A)^{\frac{\gamma}{2}+1}}\\
  &
\ \ \ \ + \frac{C_\gamma\varepsilon|\varepsilon|^\gamma \gamma(\gamma+2)\rho_i}{4}\int\!\!\!\!\!\!\!\!\!\; {}-{}\int_0^1 \int_0^1 \frac{t B^{2}\sin(\beta-\eta)d\tau dt d\eta}{\left(\rho_i^2 A(\beta, \eta)+t\tau\varepsilon|\varepsilon|^\gamma B\left(\rho_i, g_i, \beta, \eta\right)\right)^{\frac{\gamma+4}{2}}}\\
		&\ \ \ \ +\frac{C_\gamma}{\rho_i^\gamma}\int\!\!\!\!\!\!\!\!\!\; {}-{} \frac{g_i(\eta)\sin(\beta-\eta)d\eta}{A^{\frac{\gamma}{2}}}-\frac{C_\gamma \gamma\varepsilon|\varepsilon|^\gamma}{2}\int\!\!\!\!\!\!\!\!\!\; {}-{} \int_0^1 \frac{Bg_i(\eta)\sin(\beta-\eta)dt d\eta}{\left(\rho_i^2 A(\beta, \eta)+\varepsilon|\varepsilon|^\gamma B\left(\rho_i, g_i, \beta, \eta\right)\right)^{\frac{\gamma+2}{2}}}\\
        &= -\frac{\gamma C_\gamma \rho_i}{2\rho_i^{\gamma+2}}\int\!\!\!\!\!\!\!\!\!\; {}-{}  \frac{\rho_ig_i(\eta)A\sin(\beta-\eta)d\eta}{A^{\frac{\gamma+2}{2}}}+\frac{C_\gamma}{\rho_i^\gamma}\int\!\!\!\!\!\!\!\!\!\; {}-{} \frac{g_i(\eta)\sin(\beta-\eta)d\eta}{A^{\frac{\gamma}{2}}}+\varepsilon|\varepsilon|^\gamma\mathcal{R}_{i11}\\
		&=\frac{C_\gamma}{\rho_i^\gamma}\left(1-\frac{\gamma}{2}\right)\int\!\!\!\!\!\!\!\!\!\; {}-{} \frac{g_i(\eta)\sin(\beta-\eta)d\eta}{\left|\sin(\frac{\beta-\eta}{2})\right|^\gamma}+\varepsilon|\varepsilon|^\gamma\mathcal{R}_{i11},
	\end{split}
\end{equation}
where we used the fact that $B(\rho_i,g_i,\beta, \eta)=\rho_iA(g_i(\eta)+(g_i(\beta)))+\varepsilon R_i$
with $\mathcal{R}_{i11}$ and $R_i$ not singular with respect to $\varepsilon$.

Next, we proceed to differentiate $G_i$ with respect to $\beta$ up to $\partial^{k-1}$ times. Our initial focus will be on the most singular term, namely, $G_{i12}$.
\begin{equation*}
    \begin{split}
    	&\partial^{k-1}G_{i12}=C_\gamma\int\!\!\!\!\!\!\!\!\!\; {}-{}\frac{(\partial^kg_i(\eta)-\partial^kg_i(\beta))\cos(\beta-\eta)d\eta}{\left( |\varepsilon|^{2+2\gamma}\left(g_i(\beta)-g_i(\eta)\right)^2+4(\rho_i+\varepsilon|\varepsilon|^\gamma g_i(\beta)(\rho_i+\varepsilon|\varepsilon|^\gamma g_i(\eta))\sin^2\left(\frac{\beta-\eta}{2}\right)\right)^{\frac{\gamma}{2}}}\\	
    	&
        \ \  -\frac{ \gamma C_\gamma |\varepsilon|^{1+\gamma}}{2}\int\!\!\!\!\!\!\!\!\!\; {}-{}\frac{\cos(\beta-\eta)}{\left( |\varepsilon|^{2+2\gamma}\left(g_i(\beta)-g_i(\eta)\right)^2+4(\rho_i+\varepsilon|\varepsilon|^\gamma g_i(\beta))(\rho_i+\varepsilon|\varepsilon|^\gamma g_i(\eta))\sin^2\left(\frac{\beta-\eta}{2}\right)\right)^{\frac{\gamma+2}{2}}}\\
    	&
        \ \  \times\left(2|\varepsilon|^{1+\gamma}(g_i(\beta)-g_i(\eta))(g'_i(\beta)-g'_i(\eta))+4(R_i(\beta)g'_i(\eta)+g'_i(\beta)R_i(\eta))\sin^{2}\left(\frac{\beta-\eta}{2}\right)\right)\\
    	&\ \  \times(\partial^{k-1}g_i(\eta)-\partial^{k-1}g_i(\beta))d\eta +l.o.t,
    \end{split}
\end{equation*}
where ``l.o.t'' denotes the lower order terms. Since $g_{i}(\beta)\in X^{k+\gamma-1}$ and $k\ge3$, we have $\|\partial^{N}g\|_{L^\infty}\leq C \|g\|_{X^{k+\gamma-1}}<\infty$ for $N=0,1,2$. By H\"older's inequality and mean value theorem, we can conclude that
\begin{equation*}
    \begin{split}
        \left\|\partial^{k-1}G_{i12}\right\|_{L^2}&\leq C\left\|\int\!\!\!\!\!\!\!\!\!\;{}-{} \frac{\partial^{k}g_{i}(\beta)-\partial^{k}g_{i}(\eta)}{|\sin(\frac{\beta-\eta}{2})|^{\gamma}}d\eta\right\|_{L^2}
        +C\left\|\int\!\!\!\!\!\!\!\!\!\; {}-{}\frac{\partial^{k- 1}g_i(\beta)-\partial^{k-1}g_i(\eta)}{|\sin(\frac{\beta-\eta}{2})|^{\gamma}}d\eta\right\|_{L^2}\\
        &\leq C \|g\|_{X^{k+\gamma-1}}+C\|g\|_{X^{k+\gamma-2}}<\infty.
    \end{split}
\end{equation*}
It's worth noting that obtaining bounds for $\left\|\partial^{k-1}G_{i13}\right\|_{L^2}$, $\left\|\partial^{k-1}G_{i12}\right\|_{L^2}$, $\left\|\partial^{k-1}G_{i14}\right\|_{L^2}$ is comparatively simpler than acquiring the bound for $\left\|\partial^{k-1}G_{i11}\right\|_{L^2}$. Consequently, we can conclude that the range of $F_{i,1}$ is within $Y^{k-1}$.

To establish the continuity of $G_{i,1}$, we will focus our attention on demonstrating continuity for the most singular term, namely, $G_{i12}$. Specifically, for any $g_{i1},g_{i2}\in X^{k+\gamma-1}$, we aim to show that the following inequality holds: $\norm{G_{i12}(\varepsilon, g_{i1})-G_{i12}(\varepsilon, g_{i2})}_{Y^{k-1}}\le C\norm{g_{i1}-g_{i2}}_{X^{k+\gamma-1}}$. For a more general function $g$, we introduce the following definition
\begin{equation*}
    \Delta g=g(\beta)-g(\eta),\ \mbox{where}\ g=g(\beta),\ \mbox{and }\,
    \tilde{g}=g(\eta)
\end{equation*}%
and
\begin{equation*}
    D_{\gamma }(g_i)=\varepsilon ^{2+2\gamma }\Delta g_i^{2}+4(\rho_i+\varepsilon |\varepsilon|^{\gamma}g_i)(\rho_i+\varepsilon |\varepsilon|^{\gamma }\tilde{g}_i)\sin^{2}\left(\frac{\beta-\eta}{2}\right) .
\end{equation*}
Then for $g_{i1},g_{i2}\in V_{\mu}$, $i=1,2,\ldots,m$, we have
\begin{equation*}
    \begin{split}
        G_{i12}(\varepsilon ,g_{i1})& -G_{i12}(\varepsilon ,g_{i2})=C_{\gamma}\int
        \!\!\!\!\!\!\!\!\!\;{}-{}\frac{(\Delta g_{i1}^{\prime}-\Delta
        g_{i2}^{\prime })\cos (\beta-\eta)d\eta}{D_{\gamma}(g_{i1})^{\frac{1}{2}}} \\
        & +\left( C_{\gamma}\int \!\!\!\!\!\!\!\!\!\;{}-{}\frac{\Delta
        g_{i2}^{\prime }\cos (\beta-\eta)d\eta}{D_{\gamma}(g_{i1})^{\frac{1}{2}}}-C_{\gamma}\int
        \!\!\!\!\!\!\!\!\!\;{}-{}\frac{\Delta g_{i2}^{\prime }\cos(\beta-\eta)d\eta}{D_{\gamma}(g_{i2})^{\frac{1}{2}}}\right)\\
        & =I_{1}+I_{2}.
    \end{split}
\end{equation*}
We can easily observe that $I_{1}$ is bounded by $\Vert I_{1}\Vert_{Y^{k-1}}\leq C\Vert g_{i1}-g_{i2}\Vert_{X^{k+\gamma-1 }}$. To estimate $I_{2}$, we can employ the mean value theorem in the following manner:
\begin{equation}
    \begin{split}
        & \frac{1}{D_{\gamma }(g_{i1})^{\frac{\gamma }{2}}}-\frac{1}{D_{\gamma
        }(g_{i2})^{\frac{\gamma }{2}}}=\frac{D_{\gamma
        }(g_{i2})^{\frac{\gamma }{2}}-D_{\gamma }(g_{i1})^{\frac{\gamma }{2}}}{D_{\gamma }(g_{i1})^{\frac{\gamma }  {2}%
        }D_{\gamma }(g_{i2})^{\frac{\gamma }{2}}} \\
        &=\frac{\gamma }{2}\frac{D_{\gamma
        }(g_{i2})-D_{\gamma }(g_{i1})}{D_{\gamma }(\delta _{\beta,\eta}g_{i1}+(1-\delta
        _{\beta,\eta})g_{i2})^{1-\frac{\gamma }{2}}D_{\gamma }(g_{i1})^{\frac{\gamma }{2}%
        }D_{\gamma }(g_{i2})^{\frac{\gamma }{2}}} \\
        & {\resizebox{.98\hsize}{!}{$=\frac{\gamma}{2}\frac{|%
        \varepsilon|^{2+2\gamma}(\Delta g_{i2}^2-\Delta
        g_{i1}^2)+4\varepsilon|\varepsilon|^\gamma
        ((g_{i2}-g_{i1})(\rho_i+\varepsilon|\varepsilon|^\gamma  \tilde
        g_{i2})+(\tilde g_{i2}-\tilde g_{i1})(\rho_i+\varepsilon|\varepsilon|^\gamma
        g_{i1}))\sin^2(\frac{\beta-\eta}{2})}{D_\gamma(\delta_{\beta,\eta}g_{i1}+(1-%
        \delta_{\beta,\eta})g_{i2})^{1-\frac{\gamma}{2}}D_\gamma(g_{i1})^\frac{\gamma}{2}D_%
        \gamma(g_{i2})^\frac{\gamma}{2}}$}},
    \end{split}\label{2-7}
\end{equation}%
for some $\delta _{\beta,\eta}\in (0,1)$. Now, by combining \eqref{2-7} and the fact  $D_{\gamma}(g_i)\sim 4 \rho_i^2\sin
^{2}(\frac{\beta-\eta}{2})\sim \rho_i^2|\beta-\eta|^{2}$ as $|\beta-\eta|\rightarrow 0$ one gets
\begin{equation*}
    \begin{split}
        & \partial^{k-1}I_2\sim \frac{C}{\rho_i^\gamma}\displaystyle\int\!\!\!\!\!\!\!\!\!\;
        {}-{}\frac{\partial^{k-1}g_{i2}(\beta)-\partial^{k-1}g_{i2}(\eta)}{|\sin(%
        \frac{\beta-\eta}{2})|^\gamma}\\
        &
        \qquad\qquad \qquad\times \left(\frac{|\varepsilon|^{2+2\gamma}(\Delta
        g_{i2}^2-\Delta
        g_{i1}^2)}{|\beta-\eta|^{1+\gamma}}+4\varepsilon|\varepsilon|^\gamma(g_{i2}-g_{i1}+\tilde
        g_{i2}-\tilde g_{i1})\right)d\eta +l.o.t.
    \end{split}%
\end{equation*}%
From this analysis, we can conclude that $\|I_2\|_{Y^{k-1}}\le C\|g_{i1}-g_{i2}\|_{X^{k+\gamma-1}}$. Thus, we have demonstrated that $G_{i,1}(\varepsilon, g): \left(-\frac{1}{2}, \frac{1}{2}\right)\times V_{\mu} \rightarrow Y^{k-1}$ is continuous. In equation \eqref{2-6}, we have already assembled the primary components of $G_{i,1}$.  Now, we will also make use of the Taylor formula \eqref{taylor} on $G_{i12}$ and $G_{i13}$ to obtain
\begin{equation}\label{2-8}
    \begin{split}
    	G_{i,1}&=\frac{C_\gamma}{\rho_i^\gamma}\left(1-\frac{\gamma}{2}\right)\int\!\!\!\!\!\!\!\!\!\; {}-{} \frac{g_i(\eta)\sin(\beta-\eta)d\eta}{|\sin(\frac{\beta-\eta}{2})|^\gamma}-\frac{C_\gamma}{\rho_i^\gamma}\int\!\!\!\!\!\!\!\!\!\; {}-{} \frac{(g'_i(\eta)-g'_i(\beta))\cos(\beta-\eta)d\eta}{|\sin(\frac{\beta-\eta}{2})|^\gamma}+\varepsilon|\varepsilon|\mathcal{R}_{i,1}\\
    	&=\frac{C_\gamma}{\rho_i^\gamma}\left(1-\frac{\gamma}{2}\right)\int\!\!\!\!\!\!\!\!\!\; {}-{} \frac{g_i(\beta-\eta)\sin(\eta)d\eta}{\sin(\frac{\eta}{2})}-\frac{C_\gamma}{\rho_i^\gamma}\int\!\!\!\!\!\!\!\!\!\; {}-{} \frac{(g'_i(\beta)-g'_i(\beta-\eta))\cos(\eta)d\eta}{\sin(\frac{\eta}{2})}+\varepsilon|\varepsilon|\mathcal{R}_{i,1},
    \end{split}
\end{equation}
where $\mathcal{R}_{i,1}(\varepsilon, f): \left(-\frac{1}{2}, \frac{1}{2}\right)\times V_{\mu} \rightarrow Y^{k-1}$ is continuous by previous discussion.

Since $K_\gamma^0(x,y)$ is smooth in $\Omega$, the terms $G_{i,2}$ and $G_{i,3}$ are apparently smooth and belong to $H^k$. It is noteworthy that $z_i^{\prime}(\beta)=$ $\varepsilon R_i(\beta)(-\sin \beta, \cos \beta)+\varepsilon^{2+\gamma} g_i^{\prime}(\beta)(\cos \beta, \sin \beta)$. Consequently, the $\varepsilon$ in the denominator of the initial term in $G_{i, 3}$ gets eliminated. A simple observation reveals that the integral within $G_{i, 2}$ equals to 0 when $\varepsilon=0$. In fact, it can be demonstrated that the integral within $G_{i, 2}$ possesses an expansion of the type $C_1 \varepsilon+O\left(\varepsilon^2\right)$ by applying Taylor formula, where $C_1$ is a constant uninfluenced by $\varepsilon$ (refer to \eqref{333}). More precisely, the majority of  the linear terms in the quotient resulting of the integral in $G_{i, 2}$ contain a factor with an exponent greater than 1 in relation to $\varepsilon$. Then, we can see those terms as are remaining terms.  The single linear term that possesses an exponent of 1 in the $\varepsilon$ factor  is the term  derived from the expansion  of the denominator which is obtained by the Taylor formula. This demonstrates that the $\varepsilon$ in the denominator of the initial term in $G_{i, 2}$ is also eliminated. Consequently, the continuity of the remaining  terms, $G_{i, 2}$ and $G_{i, 3}$, can be easily verified.  In fact, we first treat the term  $G_{i, 2}$ defined in Section \ref{2}. By direct computations, for $\varepsilon$ small and using the Taylor formula \eqref{taylor}, we find

\begin{equation}\label{333}
    \begin{aligned}
        & G_{i, 2}(\varepsilon, \boldsymbol{\rho}, \boldsymbol{x}, \boldsymbol{g})=
\\
        &=\frac{C_\gamma}{\varepsilon}\sum_{j \neq i}\int\!\!\!\!\!\!\!\!\!\; {}-{} \frac{(\rho_j+\varepsilon|\varepsilon|^\gamma g_j(\eta))\sin(\beta-\eta)d\eta}{\left(A_{i, j}+\varepsilon B_{i, j}(\beta, \eta)\right)^{\frac{\gamma}{2}}}+ \frac{C_\gamma\varepsilon|\varepsilon|^{2\gamma}}{\rho_i+\varepsilon|\varepsilon|^\gamma g_i(\beta)}\int\!\!\!\!\!\!\!\!\!\; {}-{} \frac{g'_i(\beta)g'_j(\eta)\sin(\beta-\eta)d\eta}{\left(A_{i, j}+\varepsilon B_{i, j}(\beta, \eta)\right)^{\frac{\gamma}{2}}}\\
		& \quad +C_\gamma |\varepsilon|^\gamma\int\!\!\!\!\!\!\!\!\!\; {}-{} \frac{g'_j(\eta)\cos(\beta-\eta)d\eta}{\left(A_{i, j}+\varepsilon B_{i, j}(\beta, \eta)\right)^{\frac{\gamma}{2}}}-\frac{C_\gamma  |\varepsilon|^\gamma g'_i(\beta)}{\rho_i+\varepsilon|\varepsilon|^{\gamma} g_i(\beta)}\int\!\!\!\!\!\!\!\!\!\; {}-{} \frac{(\rho_j+\varepsilon|\varepsilon|^\gamma g_j(\eta))\cos(\beta-\eta)d\eta}{\left(A_{i, j}+\varepsilon B_{i, j}(\beta, \eta)\right)^{\frac{\gamma}{2}}}\\			
        & =\frac{C_\gamma}{\varepsilon}\sum_{j \neq i}\int\!\!\!\!\!\!\!\!\!\; {}-{} \frac{\rho_j\sin(\beta-\eta)d\eta}{\left(A_{i, j}+\varepsilon B_{i, j}(\beta, \eta)\right)^{\frac{\gamma}{2}}}+\varepsilon \mathcal{R}_{i, 2}\\
        &=\frac{C_\gamma}{\varepsilon}\sum_{j \neq i}\int\!\!\!\!\!\!\!\!\!\; {}-{} \frac{\rho_j\sin(\beta-\eta)d\eta}{\left(A_{i, j}\right)^{\frac{\gamma}{2}}}-\frac{\gamma C_\gamma}{2\varepsilon}\sum_{j \neq i}\int\!\!\!\!\!\!\!\!\!\; {}-{}\int_0^1\frac{\varepsilon \rho_j B_{i,j}\sin(\beta-\eta)d\eta d\tau}{\left(A_{i, j}+\tau\varepsilon B_{i, j}(\beta, \eta)\right)^{\frac{\gamma}{2}+1}}+\varepsilon \mathcal{R}_{i, 2} \\
        &=-{\resizebox{.9\hsize}{!}{$\frac{\gamma C_\gamma}{2\varepsilon}\sum_{j \neq i}  \int\!\!\!\!\!\!\!\!\!\; {}-{}\frac{\varepsilon \rho_j B_{i,j}\sin(\beta-\eta)d\eta }{\left(A_{i, j}\right)^{\frac{\gamma}{2}+1}}-\frac{\gamma (\gamma+2)C_\gamma}{4\varepsilon}\sum_{j \neq i}  \int\!\!\!\!\!\!\!\!\!\; {}-{}\int_0^1\int_0^1\frac{\varepsilon^2 \rho_j \tau B^2_{i,j}\sin(\beta-\eta)d\eta d\tau dt}{\left(A_{i, j}+t\tau\varepsilon B_{i, j}(\beta, \eta)\right)^{\frac{\gamma}{2}+2}}+\varepsilon \mathcal{R}_{i, 2} $}} \\
        &= -\frac{\gamma C_\gamma}{2\varepsilon}\sum_{j \neq i}\int\!\!\!\!\!\!\!\!\!\; {}-{}\frac{\varepsilon \rho_j \left[2(x_{i}-x_{j})\cdot(\rho_i(\cos\beta, \sin\beta)-\rho_j(\cos\eta, \sin\eta))\right]\sin(\beta-\eta)d\eta }{\left(A_{i, j}\right)^{\frac{\gamma}{2}+1}}+\varepsilon \mathcal{R}_{i, 2}\\
        &=\frac{\gamma C_\gamma}{2\pi} \sum_{j \neq i} \rho_j^2 \int_0^{2 \pi} \frac{\sin (\beta-\eta)\left(x_i-x_j\right) \cdot(\cos \eta, \sin \eta)}{\left(A_{i, j}\right)^{\frac{\gamma}{2}+1}} d \eta+\varepsilon \mathcal{R}_{i, 2} \\
        &= \frac{\gamma C_\gamma}{2}\sum_{j \neq i} \rho_j^2 \frac{\left(x_i-x_j\right) \cdot(\sin \beta,-\cos \beta)}{\left|x_i-x_j\right|^{\gamma+2}}+\varepsilon \mathcal{R}_{i,2}.
    \end{aligned}
\end{equation}
Here, $\mathcal{R}_{i,2}$ refers to a generic bounded function that is continuous. We have utilized the identity $\int_0^{2 \pi} \sin (\beta-\eta) d \eta=0$. We now turn to $G_{i,3}$ and note that
\begin{equation*}
    \begin{split}
        G_{i,3}=&\frac{-1}{\varepsilon R_i(\beta)} \sum_{j=1}^{m}\int_{0}^{2\pi}\int_{0}^{R_{j}(\eta)} z'_{i}(\beta)\cdot\nabla_{x}K_\gamma^0(z_{i}(\beta),x_{j}+\varepsilon  \vartheta(\cos{\eta},\sin{\eta})))\vartheta d\vartheta\,d\eta\\
        =&-\sum_{j=1}^{m}\int_{0}^{2\pi}\int_{0}^{R_{j}(\eta)}(-\sin{\beta},\cos{\beta})\cdot\nabla_{x}K_\gamma^0(z_{i}(\beta),x_{j}+\varepsilon \vartheta(\cos{\eta},\sin{\eta})))\vartheta d\vartheta\,d\eta\\
        &-\frac{\varepsilon^{2} g_{i}'(\beta)}{R_i(\beta)}\sum_{j=1}^{m}\int_{0}^{2\pi}\int_{0}^{R_{j}(\eta)}((\cos{\beta},\sin{\beta}))\cdot\nabla_{x}K_\gamma^0(z_{i}(\beta),x_{j}+\varepsilon \vartheta(\cos{\eta},\sin{\eta})))\vartheta d\vartheta\,d\eta.\\
    \end{split}
\end{equation*}
By Mean Value Theorem for Integrals, we have
\begin{equation*}
    \begin{split}
       \nabla_{x}K_\gamma^0(z_{i}(\beta),x_{j}&+\varepsilon \vartheta(\cos{\eta},\sin{\eta})))=\nabla_{x}K_\gamma^0(z_{i}(\beta),x_{j})\\
       &+\left(\int_{0}^{1}\partial_{t}\nabla_{x}K_\gamma^0(z_{i}(\beta),x_{j}+t\varepsilon \vartheta(\cos{\eta},\sin{\eta})))\,dt\right)\cdot\varepsilon \vartheta(\cos{\eta},\sin{\eta}))\\
       =&\nabla_{x}K_\gamma^0(x_{i},x_{j}) + \left(\int_{0}^{1}\partial_{t}\nabla_{x}K_\gamma^0(x_{i}+t\varepsilon \vartheta(\cos{\beta},\sin{\beta})),x_{j})\,dt\right)\cdot\varepsilon \vartheta(\cos{\beta},\sin{\beta}))\\
       &+ \left(\int_{0}^{1}\partial_{t}\nabla_{x}K_\gamma^0(z_{i}(\beta),x_{j}+t\varepsilon \vartheta(\cos{\eta},\sin{\eta})))\,dt\right)\cdot\varepsilon   \vartheta(\cos{\eta},\sin{\eta})).\\
    \end{split}
\end{equation*}
Therefore,
\begin{equation*}
    \begin{split}
        G_{i,3}=&\sum_{j=1}^{m}\int_{0}^{2\pi}\int_{0}^{R_{j}(\eta)}(\sin{\beta},-\cos{\beta})\cdot\nabla_{x}K_\gamma^0(x_{i},x_{j})\vartheta\,d\vartheta\,d\eta
        +\varepsilon\mathcal{R}_{i,3}\\
        =&\frac{1}{2}\sum_{j=1}^{m}(\sin{\beta},-\cos{\beta})\cdot\nabla_{x}K_\gamma^0(x_{i},x_{j})\int_{0}^{2\pi}R_{j}^{2}(\eta)\,d\eta+\varepsilon\mathcal{R}_{i,3}\\
        =&\frac{1}{2}\sum_{j=1}^{m}(\sin{\beta},-\cos{\beta})\cdot\nabla_{x}K_\gamma^0(x_{i},x_{j})\int_{0}^{2\pi}(\rho_{j}+\varepsilon^{1+\gamma}g_{j}(\eta))^{2}\,d\eta +\varepsilon\mathcal{R}_{i,3}\\
        =&\frac{1}{2}\sum_{j=1}^{m}(\sin{\beta},-\cos{\beta})\cdot\nabla_{x}K_\gamma^0(x_{i},x_{j})\int_{0}^{2\pi}(\rho_{j}^{2}+2\rho_{j}\varepsilon^{1+\gamma}g_{j}(\eta)+\varepsilon^{2+2\gamma}g_{j}^{2}(\eta))\,d\eta +\varepsilon\mathcal{R}_{i,3}\\
        =&\sum_{j=1}^{m}(\sin{\beta},-\cos{\beta})\cdot\nabla_{x}K_\gamma^0(x_{i},x_{j}) \pi \rho_{j}^{2} + \varepsilon\mathcal{R}_{i,3},
    \end{split}
\end{equation*}
where  $\mathcal{R}_{i,3}$ represents a bounded and continuous function that may vary from line to line. As a result, we have demonstrated
\begin{equation}\label{3-6}
    G_{i, 3}(\varepsilon, \boldsymbol{\rho}, \boldsymbol{x}, \boldsymbol{g})=\sum_{j=1}^m \pi \rho_j^2(\sin \beta,-\cos \beta) \cdot \nabla_x K_\gamma^0\left(x_i, x_j\right)+\varepsilon \mathcal{R}_{i, 3} .
\end{equation}
Thus, we conclude the proof of the continuity of the functional $g_i$.
\end{proof}

By combining  \eqref{2-8}, \eqref{333} and \eqref{3-6} one gets the following expressions
\begin{equation}\label{3-7}
    \begin{cases}
        G_{i,1}&=\dfrac{C_\gamma}{\rho_i^\gamma}\left(1-\frac{\gamma}{2}\right)\displaystyle\int\!\!\!\!\!\!\!\!\!\; {}-{}    \frac{g_i(\beta-\eta)\sin(\eta)d\eta}{\sin(\frac{\eta}{2})}-\frac{C_\gamma}    {\rho_i^\gamma}\displaystyle\int\!\!\!\!\!\!\!\!\!\; {}-{} \frac{(g'_i(\beta)-g'_i(\beta-\eta))\cos(\eta)d\eta}{\sin(\frac{\eta}{2})}+\varepsilon|\varepsilon|^\gamma\mathcal{R}_{i,1}, \\
        G_{i,2}&= \mathlarger{\sum}_{j \neq i} \kappa_j  \dfrac{\gamma C_\gamma(x_i-x_j)}{2\pi|x_i-      x_j|^{\gamma+2}}\cdot (\sin\beta, -\cos\beta)+\varepsilon \mathcal{R}_{i, 2},\\
        G_{i,3}&=\mathlarger{\sum}_{j=1}^m \kappa_j  \nabla_x K_\gamma^0\left(x_i, x_j\right) \cdot(\sin \beta,-\cos \beta)+\varepsilon \mathcal{R}_{i, 3} ,%
    \end{cases}
\end{equation}%
where $\mathcal{R}_{i,1}$, $\mathcal{R}_{i,2}$ and $\mathcal{R}_{i,3}$ are bounded and smooth.\medskip

The subsequent proposition focuses on the $C^1$ smoothness of the functional $G_i.$
\begin{lemma}\label{lem2-3}
    For each $(\varepsilon, \boldsymbol{\rho}, \boldsymbol{x}, \boldsymbol{g})\in \left(-\varepsilon_0,    \varepsilon_0\right)\times I^m \times B_{\boldsymbol{\rho}_0}\left(\boldsymbol{x}_0\right) \times V_\mu$, the Gateaux derivatives $\partial_{g_i} G_i(\varepsilon, \boldsymbol{\rho}, \boldsymbol{x}, \boldsymbol{g})h: \left(-\varepsilon_0, \varepsilon_0\right)\times I^m \times     B_{\boldsymbol{\rho}_0}\left(\boldsymbol{x}_0\right) \times V_\mu\to Y^{k-1}$ and $\partial_{g_j}     G_i(\varepsilon, \boldsymbol{\rho}, \boldsymbol{x}, \boldsymbol{g})h: \left(-\varepsilon_0,    \varepsilon_0\right)\times I^m \times B_{\boldsymbol{\rho}_0}\left(\boldsymbol{x}_0\right) \times V_\mu\to Y^{k-1}$ exist and are continuous.
\end{lemma}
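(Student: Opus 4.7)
The plan is to compute the Gateaux derivatives explicitly and then verify their continuity with respect to each argument, following the decomposition $G_i = G_{i,1} + G_{i,2} + G_{i,3}$ and mirroring the estimates already established in Lemma \ref{p3-1}. Since $G_{i,1}$ depends only on $(\varepsilon,\rho_i,g_i)$, the derivative $\partial_{g_j} G_i$ for $j\neq i$ picks up contributions only from $G_{i,2}$ and $G_{i,3}$, which is the easier case and can be treated in parallel with the $\partial_{g_i}$ counterparts of those two pieces.

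First I would address the hardest piece, $\partial_{g_i} G_{i,1}$. Starting from the expansion \eqref{3-7}, the leading part of $G_{i,1}$ is linear in $g_i$, so its Gateaux derivative at direction $h$ is simply obtained by replacing $g_i$ by $h$. The remaining task is to show that $\partial_{g_i}\mathcal{R}_{i,1}$ exists as a bounded linear map $X^{k+\gamma-1}\to Y^{k-1}$. I would do this by formally differentiating each of the four subterms $G_{i,1j}$ in \eqref{2-2}. The dependence of the denominators on $g_i$ is controlled through the identity $\partial_s(A+sB)^{-\gamma/2}=-(\gamma/2)\,B\,(A+sB)^{-\gamma/2-1}$, which after invoking the Taylor formula \eqref{taylor} produces integrals of exactly the same structure as those treated in the proof of Lemma \ref{p3-1}, with an extra factor of $h$ (or of its first two derivatives) in the numerator. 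The bounds carry over verbatim and yield that $\partial_{g_i}G_{i,1}(\varepsilon,\rho_i,g_i)\,h$ is well defined in $Y^{k-1}$ with norm bounded by a constant times $\|h\|_{X^{k+\gamma-1}}$, uniformly for $(\varepsilon,\rho_i,g_i)$ in compact subsets of the domain.

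For $G_{i,2}$, the kernel $(A_{i,j}+\varepsilon B_{i,j})^{-\gamma/2}$ is smooth in $\boldsymbol{g}$ because $A_{i,j}^{1/2}=|x_i-x_j|\geq 2\rho_0>0$ on $B_{\rho_0}(\boldsymbol{x}_0)$, so there is no singularity to confront. Differentiation under the integral sign is legitimate, and the derivatives $\partial_{g_i}G_{i,2}$ and $\partial_{g_j}G_{i,2}$ are bounded in $Y^{k-1}$ by use of Cauchy--Schwarz together with the embedding $H^k\hookrightarrow W^{1,\infty}$ valid for $k\geq 3$. The term $G_{i,3}$ is even simpler: since $K_\gamma^0\in C^\infty(\Omega\times\Omega)$, the functional $G_{i,3}$ is a smooth functional of $\boldsymbol{g}$ and differentiation through the double integral is routine, yielding derivatives that lie in $H^{k-1}$ with the required bounds.

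The main obstacle will be the continuity of $\partial_{g_i}G_{i,1}$ as a function of $(\varepsilon,\rho_i,g_i)$, particularly near $\varepsilon=0$, where the prefactor $\varepsilon^{-1}|\varepsilon|^{-\gamma}$ in $G_{i11}$ is singular. The strategy is the same one used in Lemma \ref{p3-1}: for two points $g_{i1},g_{i2}\in V_\mu$ (or two parameter values), one rewrites the difference of derivatives by applying the factorization \eqref{2-7}, producing integrands with extra differences $\Delta g_{i1}-\Delta g_{i2}$ in the numerator. These differences cancel the singular prefactor and, combined with the embedding $X^{k+\gamma-1}\hookrightarrow C^2$, yield Lipschitz-type estimates in the operator norm on $L(X^{k+\gamma-1},Y^{k-1})$, with analogous (in fact simpler) estimates in the arguments $\varepsilon$ and $\rho_i$. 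Once continuity of all Gateaux derivatives is established in every argument, standard functional calculus upgrades $G_i$ to a $C^1$ map, which is precisely the claim of the lemma.
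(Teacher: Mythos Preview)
Your proposal is correct and follows essentially the same approach as the paper: both arguments decompose $G_{i,1}$ into the four subterms of \eqref{2-2}, compute the Gateaux derivatives by formally differentiating the kernels (producing the extra factor $-\tfrac{\gamma}{2}B(A+\varepsilon|\varepsilon|^\gamma B)^{-\gamma/2-1}$), treat $G_{i,2}$ and $G_{i,3}$ as smooth lower-order contributions, and establish continuity via the factorization \eqref{2-7} exactly as in Lemma~\ref{p3-1}. The only cosmetic difference is that the paper also spells out the verification that the formal derivative coincides with the Gateaux limit (focusing on the most singular piece $G_{i12}$), whereas you take this step for granted; the required estimate is the same mean-value argument you already invoke for continuity, so nothing is missing.
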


\begin{proof}
We claim $\partial_{ g_i} G_{i,1}(\varepsilon, g_i)h=\partial_{ g_i} G_{i11}+\partial_{ g_i} G_{i12}+\partial_{ g_i} G_{i13}+\partial_{ g_i} G_{i14}$ is continuous, where
	\begin{equation}\label{2-11}
		\begin{split}		
    			&\partial_{ g_i} G_{i11}=C_\gamma\int\!\!\!\!\!\!\!\!\!\; {}-{} \frac{h(\eta)\sin(\beta-\eta)d\eta}{\left( |\varepsilon|^{2+2\gamma}\left(g_i(\beta)-g_i(\eta)\right)^2+4(\rho_i+\varepsilon|\varepsilon|^\gamma g_i(\beta))(\rho_i+\varepsilon|\varepsilon|^\gamma g_i(\eta))\sin^2\left(\frac{\beta-\eta}{2}\right)\right)^{\frac{\gamma}{2}}}\\
			& \ \ \ \ -\frac{\gamma C_\gamma}{2}\int\!\!\!\!\!\!\!\!\!\; {}-{} \frac{(\rho_i+\varepsilon|\varepsilon|^\gamma  g_i(\eta))\sin(\beta-\eta)}{\left( |\varepsilon|^4\left(g_i(\beta)-g_i(\eta)\right)^2+4(\rho_i+\varepsilon|\varepsilon|^\gamma g_i(\beta))(\rho_i+\varepsilon|\varepsilon|^\gamma  g_i(\eta))\sin^2\left(\frac{\beta-\eta}{2}\right)\right)^{\frac{\gamma+2}{2}}}\\
			& \ \ \ \ \times\bigg(\varepsilon|\varepsilon|^\gamma(2(g_i(\beta)-g_i(\eta))(h(\beta)-h(\eta))\\
			& \ \ \ \ \ \ \ \ +4(h(\beta)(\rho_i+\varepsilon|\varepsilon|^\gamma  g_i(\eta))+h(\eta)(\rho_i+\varepsilon|\varepsilon|^\gamma g_i(\beta)))\sin^2\left(\frac{\beta-\eta}{2}\right)\bigg)d\eta,
		\end{split}
	\end{equation}
    \begin{equation}\label{2-12}
    	\begin{split}
    		&\partial_{ g_i} G_{i12}=C_\gamma\int\!\!\!\!\!\!\!\!\!\; {}-{} \frac{(h'(\eta)-h'(\beta))\cos(\beta-\eta)d\eta}{\left( |\varepsilon|^{2+2\gamma}\left(g_i(\beta)-g_i(\eta)\right)^2+4(\rho_i+\varepsilon|\varepsilon|^\gamma g_i(\beta))(\rho_i+\varepsilon|\varepsilon|^\gamma g_i(\eta))\sin^2\left(\frac{\beta-\eta}{2}\right)\right)^{\frac{\gamma}{2}}}\\
    		&  \ \ \ -\frac{\gamma C_\gamma\varepsilon|\varepsilon|^\gamma}{2}\int\!\!\!\!\!\!\!\!\!\; {}-{} \frac{(g'_i(\eta)-g'_i(\beta))\cos(\beta-\eta)}{\left( |\varepsilon|^{2+2\gamma}\left(g_i(\beta)-g_i(\eta)\right)^2+4(\rho_i+\varepsilon|\varepsilon|^\gamma g_i(\beta))(\rho_i+\varepsilon|\varepsilon|^\gamma g_i(\eta))\sin^2\left(\frac{\beta-\eta}{2}\right)\right)^{\frac{\gamma+2}{2}}}\\
    			&  \ \ \ \times\bigg(\varepsilon|\varepsilon|^\gamma(2(g_i(\beta)-g_i(\eta))(h(\beta)-h(\eta))\\
			& \ \ \ \ \ \ \ \ +4(h(\beta)(\rho_i+\varepsilon|\varepsilon|^\gamma  g_i(\eta))+h(\eta)(\rho_i+\varepsilon|\varepsilon|^\gamma g_i(\beta)))\sin^2\left(\frac{\beta-\eta}{2}\right)\bigg)d\eta,
    	\end{split}
    \end{equation}
    \begin{equation}\label{2-13}
    	\begin{split}
    		&\partial_{ g_i} G_{i13}={\resizebox{.9\hsize}{!}{$\frac{C_\gamma \varepsilon|\varepsilon|^\gamma h'(\beta)}{\rho_i+\varepsilon|\varepsilon|^\gamma g_i(\beta)}\int\!\!\!\!\!\!\!\!\!\; {}-{} \frac{(g_i(\beta)-g_i(\eta))\cos(\beta-\eta)d\eta}{\left( |\varepsilon|^{2+2\gamma}\left(g_i(\beta)-g_i(\eta)\right)^2+4(\rho_i+\varepsilon|\varepsilon|^\gamma g_i(\beta))(\rho_i+\varepsilon|\varepsilon|^\gamma g_i(\eta))\sin^2\left(\frac{\beta-\eta}{2}\right)\right)^{\frac{\gamma}{2}}}$}}\\
    		&
            \ \ \ \ +{\resizebox{.93\hsize}{!}{$\frac{C_\gamma \varepsilon|\varepsilon|^\gamma g'_i(\beta)}{\rho_i+\varepsilon|\varepsilon|^\gamma g_i(\beta)}\int\!\!\!\!\!\!\!\!\!\; {}-{} \frac{(h(\beta)-h(\eta))\cos(\beta-\eta)d\eta}{\left( |\varepsilon|^{2+2\gamma}\left(g_i(\beta)-g_i(\eta)\right)^2+4(\rho_i+\varepsilon|\varepsilon|^\gamma g_i(\beta))(\rho_i+\varepsilon|\varepsilon|^\gamma g_i(\eta))\sin^2\left(\frac{\beta-\eta}{2}\right)\right)^{\frac{\gamma}{2}}}$}}\\
            &
            \ \ \ \ -{\resizebox{.9\hsize}{!}{$\frac{C_\gamma |\varepsilon|^{2+2\gamma} g'_i(\beta)h(\beta)}{(\rho_i+\varepsilon|\varepsilon|^\gamma
            g_i(\beta))^2}\int\!\!\!\!\!\!\!\!\!\; {}-{} \frac{(g_i(\beta)-g_i(\eta))\cos(\beta-\eta)d\eta}{\left( |\varepsilon|^{2+2\gamma}\left(g_i(\beta)-g_i(\eta)\right)^2+4(\rho_i+\varepsilon|\varepsilon|^\gamma g_i(\beta))(\rho_i+\varepsilon|\varepsilon|^\gamma g_i(\eta))\sin^2\left(\frac{\beta-\eta}{2}\right)\right)^{\frac{\gamma}{2}}}$}}\\
    		&
            \ \ \ \ -{\resizebox{.9\hsize}{!}{$\frac{\gamma C_\gamma |\varepsilon|^{2+2\gamma} g'_i(\beta)}{2(\rho_i+\varepsilon|\varepsilon|^\gamma g_i(\beta))}\int\!\!\!\!\!\!\!\!\!\; {}-{} \frac{(g_i(\beta)-g_i(\eta))\cos(\beta-\eta)}{\left( |\varepsilon|^{2+2\gamma}\left(g_i(\beta)-g_i(\eta)\right)^2+4(\rho_i+\varepsilon|\varepsilon|^\gamma g_i(\beta))(\rho_i+\varepsilon|\varepsilon|^\gamma g_i(\eta))\sin^2\left(\frac{\beta-\eta}{2}\right)\right)^{\frac{\gamma+2}{2}}}$}}\\
    		&
            \ \ \ \ \times\bigg(\varepsilon|\varepsilon|^\gamma(2(g_i(\beta)-g_i(\eta))(h(\beta)-h(\eta))\\
			& \ \ \ \ \ \ \ \ +4(h(\beta)(\rho_i+\varepsilon|\varepsilon|^\gamma  g_i(\eta))+h(\eta)(\rho_i+\varepsilon|\varepsilon|^\gamma g_i(\beta)))\sin^2\left(\frac{\beta-\eta}{2}\right)\bigg)d\eta
    	\end{split}
    \end{equation}
    and
    \begin{equation}\label{2-14}
    	\begin{split}
    		&\partial_{ g_i} G_{i14}={\resizebox{.9\hsize}{!}{$\frac{C_\gamma |\varepsilon|^{1+\gamma}}{\rho_i+\varepsilon|\varepsilon|^\gamma g_i(\beta)}\int\!\!\!\!\!\!\!\!\!\; {}-{} \frac{(h'(\eta)g'_i(\beta)+h'(\beta)g'_i(\eta))\sin(\beta-\eta)d\eta}{\left( |\varepsilon|^{2+2\gamma}\left(g_i(\beta)-g_i(\eta)\right)^2+4(\rho_i+\varepsilon|\varepsilon|^\gamma g_i(\beta))(\rho_i+\varepsilon|\varepsilon|^\gamma g_i(\eta))\sin^2\left(\frac{\beta-\eta}{2}\right)\right)^{\frac{\gamma}{2}}}$}}\\
    		& \ \ \ \ -{\resizebox{.93\hsize}{!}{$\frac{\gamma C_\gamma |\varepsilon|^{2+2\gamma}}{2(\rho_i+\varepsilon|\varepsilon|^\gamma g_i(\beta))}\int\!\!\!\!\!\!\!\!\!\; {}-{} \frac{g'_i(\beta)g'_i(\eta)\sin(\beta-\eta)}{\left( |\varepsilon|^{2+2\gamma}\left(g_i(\beta)-g_i(\eta)\right)^2+4(\rho_i+\varepsilon|\varepsilon|^\gamma g_i(\beta))(\rho_i+\varepsilon|\varepsilon|^\gamma g_i(\eta))\sin^2\left(\frac{\beta-\eta}{2}\right)\right)^{\frac{\gamma+2}{2}}}$}}\\
    		& \ \ \ \ \times\bigg(\varepsilon|\varepsilon|^\gamma(2(g_i(\beta)-g_i(\eta))(h(\beta)-h(\eta))\\
			& \ \ \ \ \ \ \ \ +4(h(\beta)(\rho_i+\varepsilon|\varepsilon|^\gamma  g_i(\eta))+h(\eta)(\rho_i+\varepsilon|\varepsilon|^\gamma g_i(\beta)))\sin^2\left(\frac{\beta-\eta}{2}\right)\bigg)d\eta\\
			&\ \ \ \ \ \ \ \ \ {\resizebox{.93\hsize}{!}{$-\frac{|\varepsilon|^{2+2\gamma}C_{\gamma}g'_{i}(\beta)}{(\rho_{i}+\varepsilon|\varepsilon|^{\gamma}g_{i}(\beta))^{2}}\int\!\!\!\!\!\!\!\!\!\; {}-{} \frac{g'_{i}(\eta)h(\beta)\sin{(\beta-\eta)}}{\left( |\varepsilon|^{2+2\gamma}\left(g_i(\beta)-g_i(\eta)\right)^2+4(\rho_i+\varepsilon|\varepsilon|^\gamma g_i(\beta))(\rho_i+\varepsilon|\varepsilon|^\gamma g_i(\eta))\sin^2\left(\frac{\beta-\eta}{2}\right)\right)^{\frac{\gamma}{2}}}\, d\eta.$}}
    	\end{split}
    \end{equation}
    Moreover,
    \begin{equation}\label{partial_i2}
  \partial_{ g_i} G_{i,2}(\varepsilon, \boldsymbol{\rho}, \boldsymbol{x}, \boldsymbol{g}) = O(\varepsilon),
\end{equation}
\begin{equation}\label{partial_i3}
  \partial_{ g_i} G_{i,3}(\varepsilon, \boldsymbol{\rho}, \boldsymbol{x}, \boldsymbol{g}) = O(\varepsilon),
\end{equation}
and
\begin{equation}\label{partial_j2}
  \partial_{ g_j} G_{i}(\varepsilon, \boldsymbol{\rho}, \boldsymbol{x}, \boldsymbol{g}) = O(\varepsilon) .
\end{equation}
    To this aim, the first step is to show
    \begin{equation*}
        \lim\limits_{t\to0}\left\|\frac{G_{i1l}(\varepsilon, g_i+th)-G_{i1l}(\varepsilon, g_i)}{t}-\partial_{g_i}G_{i1l}(\varepsilon, g,h)\right\|_{Y^{k-1}}\to 0
    \end{equation*}
	for $l=1,\cdots,4$. For the sake of simplicity, we will focus exclusively on the most singular scenario, which is when $l=2$, and utilize the notations introduced in Lemma \ref{2-2}. In this context, the following holds
	\begin{equation*}
		\begin{split}
			&\frac{G_{i12}(\varepsilon, g_i+th)-G_{i12}(\varepsilon, g_i)}{t}-\partial_{g_i}G_{i12}(\varepsilon, g_i,h)\\
			&=\frac{1}{t}\int\!\!\!\!\!\!\!\!\!\; {}-{}{\resizebox{.9\hsize}{!}{$(g'_i(\beta)-g'_i(\eta))\cos(\beta-\eta)\bigg(\frac{1}{D_\gamma(g_i+th)^{\gamma/2}}-\frac{1}{D_\gamma(g_i)}+t\frac{\Delta g_i\Delta h+2(h\tilde{R}_i+\tilde{h}R_i)\sin^2(\frac{\beta-\eta}{2})}{D_\gamma(g_i)}\bigg)d\eta$}}\\
			&\ \ \ \ +\int\!\!\!\!\!\!\!\!\!\; {}-{}(h'(\beta)-h'(\eta))\cos(\beta-\eta)\bigg(\frac{1}{D_\gamma(g_i+th)}-\frac{1}{D_\gamma(g_i)}\bigg)d\eta\\
			&=F_{21}+F_{22}.
		\end{split}
	\end{equation*}
   By performing $\partial^{k-1}$ derivatives of $G_{21}$, we deduce
    \begin{equation*}
    	\begin{split}
    	  \partial^{k-1}G_{i12}&=\frac{1}{t}\int\!\!\!\!\!\!\!\!\!\; {}-{}
            \bigg(\frac{1}  {D_\gamma(g_i+th)^{\gamma/2}}-\frac{1}{D_\gamma(g_i)^{\gamma/2}}+t\frac{\Delta g_i\Delta     h+2(\tilde{R}_i h+\tilde{h} \rho_i)\sin^2(\frac{\beta-\eta}{2})}{D_\gamma(g_i)^{\gamma/2}}\bigg)\\
    	  & \ \ \ \ \times(\partial^kg_i(\beta)-\partial^kg_i(\eta))\cos(\beta-\eta)d\eta+l.o.t.
    	\end{split}
    \end{equation*}
    By applying the mean value theorem, we obtain
    \begin{equation*}
    	\frac{1}{D_\gamma(g_i+th)^{\gamma/2}}-\frac{1}{D_\gamma(g_i)^{\gamma/2}}+t\frac{\Delta g_i\Delta h+2(\tilde{R}_i h+\tilde{h} \rho_i)\sin^2(\frac{\beta-\eta}{2})}{D_\gamma(g_i)^{\gamma/2}}\sim \frac{Ct^2}{|\sin(\frac{\beta-\eta}{2})|^{\gamma/2}}\varphi(\varepsilon,g_i,h),
    \end{equation*}
    with $\|\varphi(\varepsilon,g_i,h)\|_{L^\infty}<\infty$. Hence, we can conclude that
    \begin{equation*}
    	\|G_{i12}\|_{Y^{k-1}}\le Ct\left\|\int\!\!\!\!\!\!\!\!\!\; {}-{}\frac{\partial^kg_i(\beta)-\partial^kg_i(\eta)}{|\sin(\frac{\beta-\eta}{2})|^{\gamma/2}}d\eta\right\|_{L^2}\le Ct\|f\|_{X^{k+\gamma-1}}.
    \end{equation*}
    Using the same reasoning employed in \eqref{2-7}, we can similarly establish that $\norm{F_{22}}_{Y^{k-1}}$ is bounded by $Ct\norm{f}_{X^{k+\gamma-1}}$. Thus, we conclude the first step by allowing $t$ to approach zero. The second step involves demonstrating the continuity of $\partial_{g_i} G_{i,1}(\varepsilon, f)$, a reliance on \eqref{2-7}.

Applying the same approach as described earlier, we infer that
    \begin{equation}\label{2-15}
    	\partial_{g_i} G_{i,2}(\varepsilon, g_i)=|\varepsilon|\partial_{g_i}\mathcal{R}_{i,2}(\varepsilon,g_i)
    \end{equation}
    and
       \begin{equation}\label{deriv_fi3}
    	\partial_{g_i} G_{i,3}(\varepsilon, g_i)=|\varepsilon|\partial_{g_i}\mathcal{R}_{i,3}(\varepsilon,g_i)
    \end{equation}
    are continuous, where $\mathcal{R}_{i,2}(\varepsilon,g_i)$ and $\mathcal{R}_{i,3}(\varepsilon,g_i)$ are the same terms as in \eqref{333} and \eqref{3-6} respectively. This concludes the proof of Lemma \ref{lem2-3}.
\end{proof}

From \eqref{2-11}-\eqref{deriv_fi3}, by letting $\varepsilon=0$ and $g_i\equiv 0$, one has
\begin{equation}\label{gateaux}
	\partial_{g_i} G_{i}(0,0)h=\frac{C_\gamma}{\rho_i^\gamma}\left(1-\frac{\gamma}{2}\right)\int\!\!\!\!\!\!\!\!\!\; {}-{} \frac{h(\beta-\eta)\sin(\eta)d\eta}{\left(4\sin^2(\frac{\eta}{2})\right)^{\frac{\gamma}{2}}}-\frac{C_\gamma}{\rho_i^\gamma}\int\!\!\!\!\!\!\!\!\!\; {}-{} \frac{(h'(\beta)-h'(\beta-\eta))\cos(\eta)d\eta}{\left(4\sin^2(\frac{\eta}{2})\right)^{\frac{\gamma}{2}}} .
\end{equation}

It is evident that the functional $F_i$ defined in \eqref{Gi_eq} possess $C^1$ regularity. Let $\boldsymbol{G}=\left(G_1, \ldots, G_m\right)$.   Having established the $C^1$ regularity of the functional $G_i$, we can derive the following result: Finding roots of the functional $\boldsymbol{G} = 0$ is equivalent to identifying critical points $\boldsymbol{x}_0$ of the Kirchhoff-Routh function  $\mathcal{W}_m$. As a result, we can produce a collection of trivial solutions for \eqref{1-1} with $\gamma \in (1,2)$.

\begin{lemma}\label{equivalence}
  The equation $\boldsymbol{G}(0, \boldsymbol{\rho}, \boldsymbol{x}, \mathbf{0})=\mathbf{0}$ is equivalent to
  \[\nabla \mathcal{W}_m(\boldsymbol{x})=\mathbf{0} .\]
  Therefore, it can be concluded that,  for any $\boldsymbol{\rho}$, it holds that $\boldsymbol{G}\left(0, \boldsymbol{\rho}, \boldsymbol{x}_0, \mathbf{0}\right) \equiv \mathbf{0}$.
\end{lemma}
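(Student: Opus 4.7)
The plan is to exploit the closed-form reductions recorded in \eqref{3-7} of Lemma \ref{p3-1}, which already carry out all the analytic work. Setting $\varepsilon=0$ and $\boldsymbol{g}=\mathbf{0}$ annihilates $G_{i,1}$ (its leading piece is linear in $g_i,g_i'$ and its remainder carries a factor $\varepsilon|\varepsilon|^\gamma$) and collapses $G_{i,2}$ and $G_{i,3}$ to their explicit leading contributions, both of which are vectors in $\mathbb{R}^2$ dotted with the common unit vector $(\sin\beta,-\cos\beta)$. After the identification $\pi\rho_j^2=\kappa_j$ built into \eqref{3-7}, the result is $\boldsymbol{\rho}$-independent, so I expect to obtain
\[
G_i(0,\boldsymbol{\rho},\boldsymbol{x},\mathbf{0}) = V_i(\boldsymbol{x})\cdot(\sin\beta,-\cos\beta),
\]
where
\[
V_i(\boldsymbol{x}) = \frac{\gamma C_\gamma}{2\pi}\sum_{j\neq i}\frac{\kappa_j(x_i-x_j)}{|x_i-x_j|^{\gamma+2}} + \sum_{j=1}^{m}\kappa_j\,\nabla_x K_\gamma^0(x_i,x_j).
\]
Because the family $\{(\sin\beta,-\cos\beta):\beta\in[0,2\pi)\}$ spans $\mathbb{R}^2$, the vanishing of $G_i$ as a $2\pi$-periodic function is equivalent to the vector equation $V_i(\boldsymbol{x})=\mathbf{0}$ for every $i=1,\dots,m$.

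The core step is then the identification of $V_i(\boldsymbol{x})$ with a nonzero scalar multiple of $\nabla_{x_i}\mathcal{W}_m(\boldsymbol{x})$, obtained by direct differentiation of \eqref{1-5}. Using $\nabla_x K_\gamma^1(x,y) = -\gamma C_\gamma(x-y)/|x-y|^{\gamma+2}$ together with the symmetry $K_\gamma^1(x,y)=K_\gamma^1(y,x)$, differentiating $-\sum_{k\neq l}\kappa_k\kappa_l K_\gamma^1(x_k,x_l)$ with respect to $x_i$ receives contributions from both the $k=i$ and $l=i$ indices, and thereby reproduces (up to an overall constant depending on $\kappa_i$) the first sum in $V_i$. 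A parallel computation for $\sum_{k}\kappa_k^2 K_\gamma^0(x_k,x_k)$, using $\nabla[K_\gamma^0(x,x)]=2\nabla_x K_\gamma^0(x,x)$ from symmetry of $K_\gamma^0$, together with the remaining cross contributions produces the $\sum_{j=1}^{m}\kappa_j\nabla_x K_\gamma^0(x_i,x_j)$ piece. This yields the equivalence $\boldsymbol{G}(0,\boldsymbol{\rho},\boldsymbol{x},\mathbf{0})=\mathbf{0}\iff \nabla\mathcal{W}_m(\boldsymbol{x})=\mathbf{0}$.

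For the concluding assertion, once the reduced equation is recognized as $\boldsymbol{\rho}$-free, the critical point hypothesis $\nabla\mathcal{W}_m(\boldsymbol{x}_0)=\mathbf{0}$ propagates instantly: $\boldsymbol{G}(0,\boldsymbol{\rho},\boldsymbol{x}_0,\mathbf{0})\equiv\mathbf{0}$ for every $\boldsymbol{\rho}\in I^m$. The main obstacle I anticipate is the coefficient bookkeeping in the identification step, namely tracking the factor $2\pi$ arising from the mean-value integral convention, the symmetric doubling obtained from differentiating the symmetric form of $\mathcal{W}_m$, and the precise accounting of diagonal versus off-diagonal $K_\gamma^0$ contributions so that the overall constant relating $V_i$ to $\nabla_{x_i}\mathcal{W}_m$ is uniform in $i$ and manifestly nonzero.
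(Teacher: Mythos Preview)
Your proposal is correct and follows exactly the same route as the paper: the paper's proof is a single sentence invoking the expansion \eqref{3-7} together with the definition \eqref{1-5} of $\mathcal{W}_m$, and you have simply unpacked that inference in detail. Your anticipated bookkeeping (the $2\pi$ from the mean-value convention, the symmetric doubling in $\nabla_{x_i}\mathcal{W}_m$, and the diagonal/off-diagonal $K_\gamma^0$ accounting) is precisely what the paper leaves implicit.
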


\begin{proof}
  This lemma can be directly inferred by considering the expansions of the functional given
 by \eqref{3-7}, along with the definition of the Kirchhoff-Routh function $\mathcal{W}_m.$
\end{proof}
\section{Linearization and isomorphism}\label{section4}
In this part, our primary focus lies in analyzing the linearization of the functional introduced in Section \ref{2}. We know from Lemma \eqref{equivalence} that  $(0,\boldsymbol{\rho},\boldsymbol{x},0)$ constitutes a solution to $\boldsymbol{G}=\left(G_1, \ldots, G_m\right)=0$ if and only if $\boldsymbol{x}$ represents a critical point of  $\mathcal{W}_m$. Let's proceed by selecting $\boldsymbol{x}_0$ as one such critical point of $\mathcal{W}_m$, which implies that $(0,\boldsymbol{\rho}, \boldsymbol{x}_0,0)$ satisfies $\boldsymbol{G}=0$. We will proceed to investigate the linearization of $\boldsymbol{G}$ at $(0,\boldsymbol{\rho},\boldsymbol{x},0)$.

As indicated in \eqref{3-7} at the conclusion of the proof of Lemma \ref{p3-1}, when $\varepsilon=0$ and $g_i\equiv 0$ for all $i=1,\ldots,m$, the Gateaux derivatives are:
\begin{equation}\label{4-1}
    \left\{
    \begin{aligned}
        &\partial_{g_i}G_{i}(0,\boldsymbol{\rho},\boldsymbol{x},0) h=\dfrac{C_\gamma}  {\rho_i^\gamma}\left(1-\dfrac{\gamma}{2}\right) \displaystyle\int\!\!\!\!\!\!\!\!\!\; {}-{}\frac{h(\beta- \eta)\sin(\eta)d\eta}{\left(4\sin^2(\frac{\eta}   {2})\right)^{\frac{\gamma}{2}}}-\frac{C_\gamma}      {\rho_i^\gamma}\displaystyle\int\!\!\!\!\!\!\!\!\!\; {}-{}       \frac{(h'(\beta)-h'(\beta-\eta))\cos(\eta)d\eta}      {\left(4\sin^2(\frac{\eta}{2})\right)^{\frac{\gamma}{2}}},\\
        &\partial_{g_j}G_{i}(0,\boldsymbol{\rho},\boldsymbol{x},0) h=0,\,\,\,j\not=i.
    \end{aligned}
    \right.
\end{equation}


\begin{lemma}\label{iso}
    Let $h=\sum_{j=1}^{\infty}\left(a_j \cos (j \beta)+b_j \sin (j \beta)\right)$ be in $ X^{k+\gamma-1}$. Then it holds
    \begin{equation*}
        \begin{aligned}
            & \partial_{g_i} G_i(0, \boldsymbol{\rho}, \boldsymbol{x}, \mathbf{0}) h=\frac{1}{ \rho_i^\gamma} \sum_{j=1}^{\infty} \sigma_j j\left(a_j \sin (j \beta)-b_j \cos (j \beta)\right) \\
            & \partial_{g_j} G_i(0, \boldsymbol{\rho}, \boldsymbol{x}, \mathbf{0}) h=0, \quad j \neq i
        \end{aligned}
    \end{equation*}
    with
    \begin{equation*}
        \sigma_j=
        \begin{cases}
            2^{\gamma-1}\dfrac{ \Gamma(1-\gamma)} {(\Gamma(1-\frac{\gamma}{2}))^2}      \left(\dfrac{\Gamma(1+\frac{\gamma}{2})}{\Gamma(2-\frac{\gamma}{2})}-\dfrac{\Gamma(j+\frac{\gamma}{2})}{\Gamma(1+j-\frac{\gamma}{2})}\right), & j \geq 2, \\
            0, & j=1 .
        \end{cases}
    \end{equation*}
    Furthermore, for $\boldsymbol{\rho} \in I^m, \boldsymbol{x} \in \Omega^{m},$ the operator $\partial_{g_{i}} G_{i}(0, \boldsymbol{\rho}, \boldsymbol{x}, \mathbf{0}): X^{k+\gamma-1} \rightarrow Y_0^{k-1}$ is an isomorphism. Additionally, the operator $\partial_{\boldsymbol{g}} \boldsymbol{G}(0, \boldsymbol{\rho}, \boldsymbol{x}, \mathbf{0}): (X^{k+\gamma-1})^m \rightarrow (Y_0^{k-1})^m$ is also an isomorphism.
\end{lemma}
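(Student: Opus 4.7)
The plan is to diagonalize $\partial_{g_i} G_i(0, \boldsymbol{\rho}, \boldsymbol{x}, \mathbf{0})$ in the Fourier basis. Formula (4-1) exhibits this operator as translation invariant in $\beta$ (a convolution against $\sin(\eta)/|2\sin(\eta/2)|^\gamma$ and $\cos(\eta)/|2\sin(\eta/2)|^\gamma$), so it must act as a Fourier multiplier; together with the vanishing $\partial_{g_j}G_i = 0$ for $j\neq i$, the full operator $\partial_{\boldsymbol g}\boldsymbol G(0,\boldsymbol\rho,\boldsymbol x,\mathbf 0)$ is block diagonal, and its isomorphism from $(X^{k+\gamma-1})^m$ to $(Y_0^{k-1})^m$ reduces to the isomorphism of each diagonal block.

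To compute the symbol I would substitute $h(\beta) = \cos(j\beta)$ and $h(\beta) = \sin(j\beta)$ separately into (4-1) and expand via the addition formulas. Exploiting that $|2\sin(\eta/2)|^{-\gamma}$ is even in $\eta$, parity kills half of the resulting cross terms; for instance with $h = \cos(j\beta)$ only the coefficient of $\sin(j\beta)$ survives, confirming the shape $a_j\sin(j\beta) - b_j\cos(j\beta)$ stated in the lemma. The product-to-sum identity $\sin(j\eta)\sin(\eta) = \tfrac12(\cos((j-1)\eta) - \cos((j+1)\eta))$ reduces the first integral in (4-1) to Fourier coefficients of $|2\sin(\eta/2)|^{-\gamma}$, while for the second integral I would expand $h'(\beta) - h'(\beta-\eta)$ explicitly and use $(\cos(j\eta)-1)\cos(\eta) = \tfrac12\cos((j-1)\eta) + \tfrac12\cos((j+1)\eta) - \cos(\eta)$. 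Then I would invoke the classical principal-value identity
\[
\int\!\!\!\!\!\!\!\!\!\; {}-{} \frac{\cos(k\eta)\, d\eta}{|2\sin(\eta/2)|^\gamma} = \frac{\Gamma(1-\gamma)}{\Gamma(1-\gamma/2+k)\Gamma(1-\gamma/2-k)},
\]
valid for $\gamma \in (1,2)$. Collecting the four contributions $k\in\{j-1,j+1,0,1\}$ with their prefactors $C_\gamma(1-\gamma/2)$ and $-C_\gamma$, and repeatedly applying $\Gamma(z+1) = z\Gamma(z)$, should produce the explicit factor $j$ in front and collapse the $\Gamma$-ratios into the stated $\sigma_j$. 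The cancellation $\sigma_1 = 0$ is expected: the mode $j=1$ corresponds to an infinitesimal translation of the disc, a trivial symmetry, and this is precisely why $X^{k+\gamma-1}$ and $Y_0^{k-1}$ are chosen to exclude $j = 1$.

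For the isomorphism I would verify $\sigma_j \neq 0$ for every $j \geq 2$ using the strict monotonicity of $z \mapsto \Gamma(z+\gamma/2)/\Gamma(1+z-\gamma/2)$ for $\gamma \in (1,2)$, and then apply Stirling to obtain the asymptotic $|\sigma_j j| \asymp j^\gamma$ as $j \to \infty$. Since the $j = 1$ modes are absent from both $X^{k+\gamma-1}$ and $Y_0^{k-1}$, the Fourier-diagonal formula provides an explicit two-sided inverse on the remaining modes, and Plancherel combined with the symbol bound $|\sigma_j j| \asymp j^\gamma$ identifies the $Y_0^{k-1}$-norm of the image with $\sum_{j\geq 2} j^{2(k+\gamma-1)}(a_j^2+b_j^2)$, which is comparable to the $X^{k+\gamma-1}$-norm of $h$ (the extra seminorm in the definition of $X^{k+\gamma-1}$ corresponds by Plancherel to the same symbol behavior). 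The main obstacle I anticipate is the $\Gamma$-ratio bookkeeping in step two: assembling the contributions at $k = j\pm 1, 0, 1$ with their signs and prefactors and verifying that the combination collapses to precisely $2^{\gamma-1}\Gamma(1-\gamma)/\Gamma(1-\gamma/2)^2 \cdot [\Gamma(1+\gamma/2)/\Gamma(2-\gamma/2) - \Gamma(j+\gamma/2)/\Gamma(1+j-\gamma/2)]$, and extracting the factor $j$ cleanly from telescoping among consecutive $\Gamma$-ratios. Once this explicit symbol is obtained the isomorphism is a routine Parseval argument.
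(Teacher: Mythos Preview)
Your approach is correct and matches the paper's proof in all essential respects: Fourier diagonalization of the translation-invariant operator in \eqref{4-1}, evaluation of the symbol via the classical formula for the Fourier coefficients of $|2\sin(\eta/2)|^{-\gamma}$, and the isomorphism via the asymptotic $\sigma_j\asymp j^{\gamma-1}$ together with Parseval. The paper's only variations are organizational: it cites \cite[Proposition~2.4]{Cas1} for the $\cos(j\beta)$ case and carries out the $\sin(j\beta)$ case by hand, and instead of product-to-sum it treats the first integral by the integration-by-parts identity $\cos(\eta/2)|\sin(\eta/2)|^{1-\gamma}=\tfrac{2}{2-\gamma}\partial_\eta|\sin(\eta/2)|^{2-\gamma}$ and the second via $\cos\eta=1-2\sin^2(\eta/2)$, producing five terms $s_1,\dots,s_5$ with a cancellation $s_1+s_5=0$ before collapsing to $\sigma_j$. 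For the isomorphism the paper quotes \cite[Proposition~2.7, Lemma~2.8]{Cas1} for the monotonicity and asymptotics of $\sigma_j$, exactly what you propose to derive via Stirling.

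One wording caveat: for $\gamma\in(1,2)$ the individual integrals $\int\!\!\!\!\!\!\!\!\!\;{}-{}\cos(k\eta)\,|2\sin(\eta/2)|^{-\gamma}\,d\eta$ diverge and a principal value does not regularize an even singularity; the identity you cite should be read as the analytic continuation from $\gamma<1$, which is legitimate here precisely because you only ever use convergent differences such as $\cos((j-1)\eta)-\cos((j+1)\eta)$ or $\cos\eta-\tfrac12\cos((j-1)\eta)-\tfrac12\cos((j+1)\eta)$ (so the relevant mode set is $\{1,j-1,j+1\}$ rather than $\{0,1,j-1,j+1\}$). With that understood, your bookkeeping plan is sound.
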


\begin{proof}
    Direct computations obtained in \eqref{gateaux} gives
    \begin{equation*}
        \begin{aligned}
            & \partial_{g_i} G_i(0, \boldsymbol{\rho}, \boldsymbol{x}, \mathbf{0}) h
            =\frac{C_\gamma}{\rho_i^\gamma}\left(1-\frac{\gamma}{2}\right)\int\!\!\!\!\!\!\!\!\!\; {}-{} \frac{h(\beta-\eta)\sin(\eta)d\eta}{\left(4\sin^2(\frac{\eta}{2})\right)^{\frac{\gamma}{2}}}-\frac{C_\gamma}{\rho_i^\gamma}\int\!\!\!\!\!\!\!\!\!\; {}-{} \frac{(h'(\beta)-h'(\beta-\eta))\cos(\eta)d\eta}{\left(4\sin^2(\frac{\eta}{2})\right)^{\frac{\gamma}{2}}} .
        \end{aligned}
    \end{equation*}
   According to Proposition 2.4 in \cite{Cas1}, it follows that
    \begin{equation*}
        \partial_{g_i} G_i(0, \boldsymbol{\rho}, \boldsymbol{x}, \mathbf{0})(\cos(j\beta))=\frac{1}{\rho_i^\gamma} \sigma_j  j\sin (j \beta) .
    \end{equation*}
    Now, let's address the second term in the Fourier expansion of $h$. Since $K_\gamma^0(\beta)$ can be expressed as
    \begin{equation*}
        h(\beta)=\sum_{j=1}^{\infty}\left(a_j \cos (j \beta)+b_j \sin (j \beta)\right),
    \end{equation*}
    we will compute that $\partial_{g_i} G_i(0, \boldsymbol{\rho}, \boldsymbol{x}, \mathbf{0})(\sin(j\beta))$ acts on $-\frac{1}{\rho_i^\gamma} \sigma_j  j\cos (j \beta) $ for each $j\ge 2$. By \eqref{4-1} we have
	\begin{equation}\label{sin}
		\frac{C_\gamma}{\rho_i^\gamma}\left(1-\frac{\gamma}{2}\right)\int\!\!\!\!\!\!\!\!\!\; {}-{} \frac{b_j \sin(j\beta-j\eta)\sin(\eta)d\eta}{\left(4\sin^2(\frac{\eta}{2})\right)^{\frac{\gamma}{2}}}-\frac{C_\gamma}{\rho_i^\gamma} j\int\!\!\!\!\!\!\!\!\!\; {}-{} \frac{(b_j\cos(j\beta)-b_j\cos(j\beta-j\eta))\cos(\eta)d\eta}{\left(4\sin^2(\frac{\eta}{2})\right)^{\frac{\gamma}{2}}}.
	\end{equation}
	Using identities
    \begin{equation*}
        \cos\left(\frac{y}{2}\right)\left|\sin\left(\frac{y}{2}\right)\right|^{1-\gamma}=\frac{2}{2-\gamma}\partial_y\left(\left|\sin\left(\frac{y}{2}\right)\right|^{2-\gamma}\right),
    \end{equation*}
    and  the following identity (see \cite{magnus})
    \begin{equation}\label{2-16}
        \int_0^\pi(\sin(\eta))^{2-\gamma}e^{\text{i} j\eta}d\eta=\frac{\pi e^{j\pi\text i}\Gamma(3-\gamma)}{2^{2-\gamma}\Gamma(2+j-\frac{\gamma}{2})\Gamma(2-j-\frac{\gamma}{2})}, \ \ \ \ \forall \, \gamma<3, \ \ \forall \, j\in\mathbb{R}.
    \end{equation}
	By applying integration by parts to the first term in \eqref{sin}, we obtain
	\begin{equation}\label{2-17}
		\begin{split}
			&\frac{C_\gamma}{\rho_i^\gamma}\left(1-\frac{\gamma}{2}\right)\int\!\!\!\!\!\!\!\!\!\; {}-{} \frac{b_j\sin(j\beta-j\eta)\sin(\eta)d\eta}{\left(4\sin^2(\frac{\eta}{2})\right)^{\frac{\gamma}{2}}}\\
			&=2^{1-\gamma}\frac{C_\gamma}{\rho_i^\gamma}\left(1-\frac{\gamma}{2}\right)\int\!\!\!\!\!\!\!\!\!\; {}-{}b_j\sin(j\beta-j\eta)\cos\left(\frac{\eta}{2}\right)\left|\sin\left(\frac{y}{2}\right)\right|^{1-\gamma}d\eta\\
			&=2^{1-\gamma}\frac{C_\gamma}{\rho_i^\gamma}\left(1-\frac{\gamma}{2}\right)\frac{2j}{2-\gamma}b_j\int\!\!\!\!\!\!\!\!\!\; {}-{} \cos(j\beta-j\eta)\left|\sin\left(\frac{y}{2}\right)\right|^{2-\gamma}d\eta\\
			&=2^{1-\gamma}\frac{C_\gamma}{\rho_i^\gamma}\left(1-\frac{\gamma}{2}\right)\frac{2j}{2-\gamma}b_j\cos(j\beta)\int\!\!\!\!\!\!\!\!\!\; {}-{} \cos(j\eta)\left|\sin\left(\frac{y}{2}\right)\right|^{2-\gamma}d\eta\\
			&=2^{1-\gamma}\frac{C_\gamma}{\rho_i^\gamma}\left(1-\frac{\gamma}{2}\right)\frac{2j}{2-\gamma}\frac{b_j}{2\pi}\frac{\pi \cos(j\pi)\Gamma(3-\gamma)}{2^{1-\gamma}\Gamma(2+j-\frac{\gamma}{2})\Gamma(2-j-\frac{\gamma}{2})}\cos(j\beta).
		\end{split}
	\end{equation}
    For the second term in \eqref{sin} and the identity $\cos \eta=1-2\sin^2(\frac{\eta}{2})$, it holds
    \begin{equation*}
    	\begin{split}
    		&-\frac{C_\gamma}{\rho_i^\gamma} j b_j\int\!\!\!\!\!\!\!\!\!\; {}-{} \frac{(\cos(j\beta)-\cos(j\beta-j\eta))\cos(\eta)d\eta}{\left(4\sin^2(\frac{\eta}{2})\right)^{\frac{\gamma}{2}}}\\
    		&=-2^{-\gamma}\frac{C_\gamma}{\rho_i^\gamma} j b_j\int\!\!\!\!\!\!\!\!\!\; {}-{} (\cos(j\beta)-\cos(j\beta-j\eta))\left|\sin\left(\frac{\eta}{2}\right)\right|^{-\gamma}d\eta\\
    		&\ \ \ \ +2^{1-\gamma}\frac{C_\gamma}{\rho_i^\gamma} j b_j\int\!\!\!\!\!\!\!\!\!\; {}-{} (\cos(j\beta)-\cos(j\beta-j\eta))\left|\sin\left(\frac{\eta}{2}\right)\right|^{2-\gamma}d\eta.
    	\end{split}
    \end{equation*}
    In accordance with \cite[Lemma 2.6]{Cas1} and the identity \eqref{2-16}, we can express the previous equation as
    \begin{equation}\label{2-18}
    	\begin{split}
    		& \ \ \ \ -\frac{C_\gamma}{2\pi \rho_i^\gamma} j b_j\frac{2\pi \Gamma(1-\gamma)}{\Gamma(\frac{\gamma}{2})\Gamma(1-\frac{\gamma}{2})}\left(\frac{\Gamma(\frac{\gamma}{2})}{\Gamma(1-\frac{\gamma}{2})}-\frac{\Gamma(j+\frac{\gamma}{2})}{\Gamma(1+j-\frac{\gamma}{2})}\right)\cos(j\beta)\\
    		&+2^{-1}\frac{C_\gamma}{2\pi \rho_i^\gamma} j b_j\frac{2\pi \Gamma(3-\gamma)}{\Gamma(\frac{\gamma}{2}-1)\Gamma(2-\frac{\gamma}{2})}\left(\frac{\Gamma(\frac{\gamma}{2}-1)}{\Gamma(2-\frac{\gamma}{2})}-\frac{\Gamma(j-1+\frac{\gamma}{2})}{\Gamma(2+j-\frac{\gamma}{2})}\right)\cos(j\beta)
    	\end{split}
    \end{equation}
    We incorporate the previously obtained results, particularly equations \eqref{2-17} and \eqref{2-18}, into equation \eqref{sin}. Next, we isolate the contribution of the $k$th mode in the following manner
    \begin{equation}\label{s-1}
        \begin{aligned}
            &2^{1-\gamma}\left(1-\frac{\gamma}{2}\right) \frac{C_\gamma}{2\pi \rho_i^\gamma} \frac{2 j}{2-\gamma} b_j \frac{ \pi \cos (j \pi) \Gamma(3-\gamma)}{2^{1-\gamma} \Gamma\left(2+j-\frac{\gamma}{2}\right) \Gamma\left(2-j-\frac{\gamma}{2}\right)} \\
            & - b_j 2^{-\gamma} \frac{C_\gamma}{2\pi \rho_i^\gamma}j 2^\gamma \frac{2 \pi \Gamma(1-\gamma)}{\Gamma\left(\frac{\gamma}{2}\right) \Gamma\left(1-\frac{\gamma}{2}\right)}\left(\frac{\Gamma\left(\frac{\gamma}{2}\right)}{\Gamma\left(1-\frac{\gamma}{2}\right)}-\frac{\Gamma\left(j+\frac{\gamma}{2}\right)}{\Gamma\left(1+j-\frac{\gamma}{2}\right)}\right) \\
            & +b_j 2^{-\gamma+1} \frac{C_\gamma}{2\pi \rho_i^\gamma} j 2^{\gamma-2} \frac{2 \pi \Gamma(3-\gamma)}{\Gamma\left(\frac{\gamma}{2}-1\right) \Gamma\left(2-\frac{\gamma}{2}\right)}\left(\frac{\Gamma\left(\frac{\gamma}{2}-1\right)}{\Gamma\left(2-\frac{\gamma}{2}\right)}-\frac{\Gamma\left(j-1+\frac{\gamma}{2}\right)}{\Gamma\left(2+j-\frac{\gamma}{2}\right)}\right)\\
            &=:s_1+s_2+s_3+s_4+s_5.
        \end{aligned}
    \end{equation}
    Now by straightforward computation together with  the definition of $C_\gamma=\frac{\Gamma (\frac{\gamma}{2})}{2^{1-\gamma} \Gamma(1-\frac{\gamma}{2})}$ and some properties of the Gamma function. We can group the $s_2$ and $s_4$ into
    \begin{equation}\label{s-2}
        \begin{aligned}
            s_2+s_4=& -b_j 2^{-\gamma} \frac{C_\gamma}{2\pi \rho_i^\gamma}j 2^\gamma \frac{2 \pi \Gamma(1-\gamma)}{\Gamma\left(\frac{\gamma}{2}\right) \Gamma\left(1-\frac{\gamma}{2}\right)}\left(\frac{\Gamma\left(\frac{\gamma}{2}\right)}{\Gamma\left(1-\frac{\gamma}{2}\right)}\right)\\
            & \qquad+b_j 2^{-\gamma+1} \frac{C_\gamma}{2\pi \rho_i^\gamma} j 2^{\gamma-2} \frac{2 \pi \Gamma(3-\gamma)}{\Gamma\left(\frac{\gamma}{2}-1\right) \Gamma\left(2-\frac{\gamma}{2}\right)}\left(\frac{\Gamma\left(\frac{\gamma}{2}-1\right)}{\Gamma\left(2-\frac{\gamma}{2}\right)}\right) \\
            &= -\frac{b_j}{ \rho_i^\gamma} 2^{-\gamma} \frac{1}{2\pi}\frac{\Gamma (\frac{\gamma}{2})}{2^{1-\gamma} \Gamma(1-\frac{\gamma}{2})}j 2^\gamma \frac{2 \pi \Gamma(1-\gamma)}{\Gamma\left(\frac{\gamma}{2}\right) \Gamma\left(1-\frac{\gamma}{2}\right)}\left(\frac{\Gamma\left(\frac{\gamma}{2}\right)}{\Gamma\left(1-\frac{\gamma}{2}\right)}\right) \\
            &\qquad+\frac{b_j}{\rho_i^\gamma} 2^{-\gamma+1} \frac{1}{2\pi}\frac{\Gamma (\frac{\gamma}{2})}{2^{1-\gamma} \Gamma(1-\frac{\gamma}{2})} j 2^{\gamma-2} \frac{2 \pi \Gamma(3-\gamma)}{ \Gamma\left(2-\frac{\gamma}{2}\right)}\left(\frac{1}{\Gamma\left(2-\frac{\gamma}{2}\right)}\right) \\
            & =-j \frac{b_j}{\rho_i^\gamma} 2^{\gamma-1} \frac{\Gamma(1-\gamma)}{\left(\Gamma\left(1-\frac{\gamma}{2}\right)\right)^2}\left(\frac{\Gamma\left(\frac{\gamma}{2}\right)}{\Gamma\left(1-\frac{\gamma}{2}\right)}-\frac{1}{2} \frac{(2-\gamma)(1-\gamma) \Gamma\left(\frac{\gamma}{2}\right)}{\left(1-\frac{\gamma}{2}\right) \Gamma\left(2-\frac{\gamma}{2}\right)}\right)\\
            & =-j \frac{b_j}{\rho_i^\gamma}2^{\gamma-1} \frac{\Gamma(1-\gamma)}{\left(\Gamma\left(1-\frac{\gamma}{2}\right)\right)^2} \frac{\Gamma\left(1+\frac{\gamma}{2}\right)}{\Gamma\left(2-\frac{\gamma}{2}\right)}\left(\frac{1-\frac{\gamma}{2}}{\frac{\gamma}{2}}-\frac{1}{2} \frac{(2-\gamma)(1-\gamma)}{\left(1-\frac{\gamma}{2}\right)\left(\frac{\gamma}{2}\right)}\right) \\
            & =-j \frac{b_j}{\rho_i^\gamma} 2^{\gamma-1} \frac{\Gamma(1-\gamma)}{\left(\Gamma\left(1-\frac{\gamma}{2}\right)\right)^2} \frac{\Gamma\left(1+\frac{\gamma}{2}\right)}{\Gamma\left(2-\frac{\gamma}{2}\right)}.
        \end{aligned}
    \end{equation}
    Similarly we proceed with $s_3$
    \begin{equation}\label{s-3}
        \begin{aligned}
            s_3&=b_j 2^{-\gamma} \frac{C_\gamma}{2\pi \rho_i^\gamma}j 2^\gamma \frac{2 \pi \Gamma(1-\gamma)}{\Gamma\left(\frac{\gamma}{2}\right) \Gamma\left(1-\frac{\gamma}{2}\right)}\frac{\Gamma\left(j+\frac{\gamma}{2}\right)}{\Gamma\left(1+j-\frac{\gamma}{2}\right)}\\
            &=j \frac{b_j}{\rho_i^\gamma} 2^{-\gamma} \frac{\Gamma (\frac{\gamma}{2})}{2\pi 2^{1-\gamma} \Gamma(1-\frac{\gamma}{2})}2^{\gamma} \frac{2 \pi \Gamma(1-\gamma)}{\Gamma\left(\frac{\gamma}{2}\right) \Gamma\left(1-\frac{\gamma}{2}\right)}\frac{\Gamma\left(j+\frac{\gamma}{2}\right)}{\Gamma\left(1+j-\frac{\gamma}{2}\right)}\\
            &=j \frac{b_j}{\rho_i^\gamma} 2^{\gamma-1}  \frac{ \Gamma(1-\gamma)}{ \left(\Gamma\left(1-\frac{\gamma}{2}\right)\right)^2}\frac{\Gamma\left(j+\frac{\gamma}{2}\right)}{\Gamma\left(1+j-\frac{\gamma}{2}\right)}.
        \end{aligned}
    \end{equation}
    Finally we compute the remaining terms. We affirm that $s_1=-s_5$. In fact, by computing $s_1$ we arrive at
    \begin{equation}\label{s-4}
        \begin{aligned}
            s_1&=2^{1-\gamma}\left(1-\frac{\gamma}{2}\right) \frac{C_\gamma}{2\pi \rho_i^\gamma} \frac{2 j}{2-\gamma} b_j \frac{ \pi \cos (j \pi) \Gamma(3-\gamma)}{2^{1-\gamma} \Gamma\left(2+j-\frac{\gamma}{2}\right) \Gamma\left(2-j-\frac{\gamma}{2}\right)}\\
            &=\frac{b_j}{\rho_i^\gamma}2^{1-\gamma}\left(\frac{2-\gamma}{2}\right)\frac{2 j}{2-\gamma}  \frac{1}{2\pi}\frac{\Gamma (\frac{\gamma}{2})}{2^{1-\gamma} \Gamma(1-\frac{\gamma}{2})}  \frac{ \pi (-1)^j \Gamma(3-\gamma)}{2^{1-\gamma} \Gamma\left(2+j-\frac{\gamma}{2}\right) \Gamma\left(2-j-\frac{\gamma}{2}\right)}\\
            &=j\frac{b_j}{\rho_i^\gamma}2^{\gamma-1}\frac{1}{2}\frac{\Gamma (\frac{\gamma}{2})}{ \Gamma(1-\frac{\gamma}{2})}  \frac{  (-1)^j \Gamma(3-\gamma)}{ \Gamma\left(2+j-\frac{\gamma}{2}\right) \Gamma\left(2-j-\frac{\gamma}{2}\right)}\\
            &=j\frac{b_j}{\rho_i^\gamma}2^{\gamma-1}\frac{1}{2}\frac{(-1)^j(2-\gamma)(1-\gamma)\Gamma(1-\gamma)\Gamma (\frac{\gamma}{2})}{(1+j-\frac{\gamma}{2}) \Gamma(1-\frac{\gamma}{2})\Gamma(1+j-\frac{\gamma}{2})\Gamma\left(2-j-\frac{\gamma}{2}\right)} \\
            &=j\frac{b_j}{\rho_i^\gamma}2^{\gamma-1}\frac{\Gamma(1-\gamma)}{\left(\Gamma\left(1-\frac{\gamma}{2}\right)\right)^2} \frac{\Gamma\left(1+\frac{\gamma}{2}\right)}{\Gamma\left(2-\frac{\gamma}{2}\right)}\left( \frac{(-1)^j}{2}\frac{(2-\gamma)(1-\gamma)\Gamma(1-\frac{\gamma}{2})\Gamma(\frac{\gamma}{2})}{(1+j-\frac{\gamma}{2})(1-j-\frac{\gamma}{2})\Gamma(j+\frac{\gamma}{2})\Gamma(1-j-\frac{\gamma}{2})}\right)\\
            &=j\frac{b_j}{\rho_i^\gamma}2^{\gamma-1}\frac{\Gamma(1-\gamma)}{\left(\Gamma\left(1-\frac{\gamma}{2}\right)\right)^2} \frac{\Gamma\left(1+\frac{\gamma}{2}\right)}{\Gamma\left(2-\frac{\gamma}{2}\right)}\left( \frac{(-1)^j}{2}\frac{(2-\gamma)(1-\gamma)\left(\frac{\pi}{\sin(\pi\frac{\gamma}{2})}\right)}{(1+j-\frac{\gamma}{2})(1-j-\frac{\gamma}{2})\left(\frac{\pi}{\sin(\pi(j+\frac{\gamma}{2}))}\right)}\right)\\
            &=j\frac{b_j}{\rho_i^\gamma}2^{\gamma-1}\frac{\Gamma(1-\gamma)}{\left(\Gamma\left(1-\frac{\gamma}{2}\right)\right)^2} \frac{\Gamma\left(1+\frac{\gamma}{2}\right)}{\Gamma\left(2-\frac{\gamma}{2}\right)}\left( \frac{(-1)^j}{2}\frac{(2-\gamma)(1-\gamma)\cos(\pi j)}{(1+j-\frac{\gamma}{2})(1-j-\frac{\gamma}{2})}\right)\\
            &=j\frac{b_j}{\rho_i^\gamma}2^{\gamma-1}\frac{\Gamma(1-\gamma)}{\left(\Gamma\left(1-\frac{\gamma}{2}\right)\right)^2} \frac{\Gamma\left(1+\frac{\gamma}{2}\right)}{\Gamma\left(2-\frac{\gamma}{2}\right)}\left( \frac{1}{2}\frac{(2-\gamma)(1-\gamma)}{(1+j-\frac{\gamma}{2})(1-j-\frac{\gamma}{2})}\right).
        \end{aligned}
    \end{equation}

    A simplification of the last term $s_5$  is obtained by
    \begin{equation}\label{s-5}
        \begin{aligned}
            s_5&=  -j\frac{b_j}{\rho_i^\gamma} 2^{\gamma-1} \frac{1}{2}  \frac{\Gamma (\frac{\gamma}{2})}{  \Gamma(1-\frac{\gamma}{2})} \frac{ \Gamma(3-\gamma)}{\Gamma\left(\frac{\gamma}{2}-1\right) \Gamma\left(2-\frac{\gamma}{2}\right)}\frac{\Gamma\left(j-1+\frac{\gamma}{2}\right)}{\Gamma\left(2+j-\frac{\gamma}{2}\right)}\\
            &= -j\frac{b_j}{\rho_i^\gamma} 2^{\gamma-1} \frac{1}{2}  \frac{\Gamma (\frac{\gamma}{2})}{(j-1+\frac{\gamma}{2})  \Gamma(1-\frac{\gamma}{2})} \frac{ (2-\eta)(1-\eta)\Gamma(1-\gamma)}{\Gamma\left(\frac{\gamma}{2}-1\right) \Gamma\left(2-\frac{\gamma}{2}\right)}\frac{\Gamma\left(j+\frac{\gamma}{2}\right)}{\left(1+j-\frac{\gamma}{2}\right)\Gamma\left(1+j-\frac{\gamma}{2}\right)}\\
            &=j \frac{b_j}{\rho_i^\gamma} 2^{\gamma-1} \frac{\Gamma(1-\gamma)}{\left(\Gamma\left(1-\frac{\gamma}{2}\right)\right)^2} \frac{\Gamma\left(j+\frac{\gamma}{2}\right)}{\Gamma\left(1+j-\frac{\gamma}{2}\right)}\left(\frac{-1}{2}\frac{(2-\gamma)(1-\gamma)\Gamma(\frac{\gamma}{2})}{(j-1+\frac{\gamma}{2})(1+j-\frac{\gamma}{2})(1-\frac{\gamma}{2})\Gamma(\frac{\gamma}{2}-1)}\right) \\
            &=j \frac{b_j}{\rho_i^\gamma} 2^{\gamma-1} \frac{\Gamma(1-\gamma)}{\left(\Gamma\left(1-\frac{\gamma}{2}\right)\right)^2} \frac{\Gamma\left(j+\frac{\gamma}{2}\right)}{\Gamma\left(1+j-\frac{\gamma}{2}\right)}\left(\frac{1}{2}\frac{(2-\gamma)(1-\gamma)\Gamma(\frac{\gamma}{2})}{(j-1+\frac{\gamma}{2})(1+j-\frac{\gamma}{2})(\frac{\gamma}{2}-1)\Gamma(\frac{\gamma}{2}-1)}\right) \\
            &=j \frac{b_j}{\rho_i^\gamma} 2^{\gamma-1} \frac{\Gamma(1-\gamma)}{\left(\Gamma\left(1-\frac{\gamma}{2}\right)\right)^2} \frac{\Gamma\left(j+\frac{\gamma}{2}\right)}{\Gamma\left(1+j-\frac{\gamma}{2}\right)}\left(\frac{-1}{2}\frac{(2-\gamma)(1-\gamma)}{(1-j-\frac{\gamma}{2})(1+j-\frac{\gamma}{2})}\right) ,
        \end{aligned}
    \end{equation}
    where we used the fact that  $\Gamma(z)\Gamma(1-z )=\frac{\pi}{\sin(\pi z)}$ and $\Gamma(z+1)=z\Gamma(z)$ in the above computations. Hence we have $s_1=-s_5$. Then, the above sum has the following form
    \begin{equation}\label{s-6}
        \begin{aligned}
            \sum_{k=1}^{5} s_k&=j \frac{b_j}{\rho_i^\gamma} 2^{\gamma-1}  \frac{ \Gamma(1-\gamma)}{ \left(\Gamma\left(1-\frac{\gamma}{2}\right)\right)^2}\frac{\Gamma\left(j+\frac{\gamma}{2}\right)}{\Gamma\left(1+j-\frac{\gamma}{2}\right)}-j \frac{b_j}{\rho_i^\gamma} 2^{\gamma-1} \frac{\Gamma(1-\gamma)}{\left(\Gamma\left(1-\frac{\gamma}{2}\right)\right)^2} \frac{\Gamma\left(1+\frac{\gamma}{2}\right)}{\Gamma\left(2-\frac{\gamma}{2}\right)}\\
            &=-j \frac{b_j}{\rho_i^\gamma} 2^{\gamma-1}\frac{ \Gamma(1-\gamma)}{(\Gamma(1-\frac{\gamma}{2}))^2}\left(\frac{\Gamma(1+\frac{\gamma}{2})}{\Gamma(2-\frac{\gamma}{2})}-\frac{\Gamma(j+\frac{\gamma}{2})}{\Gamma(1+j-\frac{\gamma}{2})}\right).
        \end{aligned}
    \end{equation}
Summing it up, the $j$-th coefficient is exactly
    \begin{equation}\label{eqlin}
        \partial_{g_i} G_i(0, \boldsymbol{\rho}, \boldsymbol{x}, \mathbf{0})(\sin (j \beta))=-\frac{b_j}{ \rho_i^\gamma}   j \sigma_j\cos (j \beta) .
    \end{equation}
    Then, we have
    \begin{equation*}
    \partial_{g_i} G_i(0, \boldsymbol{\rho}, \boldsymbol{x}, \mathbf{0}) h=\frac{1}{ \rho_i^\gamma} \sum_{j=1}^{\infty} \sigma_j j\left(a_j \sin (j \beta)-b_j \cos (j \beta)\right).
    \end{equation*}
    To establish the isomorphism of $\partial_{g_i}G_{i}(0, \boldsymbol{\rho}, \boldsymbol{x}, \mathbf{0})h:X^{k+\gamma-1}\to Y_0^{k-1}$, it is observed that the sequence ${\sigma_j}$, as demonstrated in \cite[Proposition 2.7]{Cas1}, is monotonically increasing and possesses a positive lower bound. Consequently, the kernel of $\partial_{g_i}G_{i}(0, \boldsymbol{\rho}, \boldsymbol{x}, \mathbf{0})$ is trivial. Our goal is now to demonstrate that for any $p(x)\in Y_0^{k-1}$, there exists an $h(x) \in X^{k+\gamma-1}$ such that $\partial_{g_i}G_{i}(0, \boldsymbol{\rho}, \boldsymbol{x}, \mathbf{0})h=p$. Referring back to the first part of this lemma, if $p$ can be represented as $p(x)=\sum\limits_{j=1}^\infty c_j\sin(jx)+d_j\cos(jx)$, then $h$ must satisfy the following
	\begin{equation*}
		h(x)=r^\gamma_i\sum\limits_{j=1}^\infty \left(c_j\sigma_j^{-1}j^{-1}\cos(jx)+d_j\sigma_j^{-1}j^{-1}\sin(jx)\right).
	\end{equation*}
    Utilizing the asymptotic expansion of the Gamma function, we derive the following asymptotic behavior: $\sigma_j=O(j^{\gamma-1})$ for $\gamma\in(1,2)$ (as detailed in \cite[Lemma 2.8]{Cas1}). Consequently, we can infer the following
    \begin{equation*}
        \begin{aligned}
            \|h\|_{X^{k+\gamma-1}}&=r^\gamma_i\sum\limits_{j=1}^\infty (c_j^2+d_j^2)\sigma_j^{-2}j^{-2}j^{2k+2\gamma-2}\\
            &\leq  C\sum\limits_{j=1}^\infty (c_j^2+d_j^2)j^{-2(\gamma-1)}j^{-2}j^{2k+2\gamma-2}\\
            &\leq  C\sum\limits_{j=1}^\infty (c_j^2+d_j^2)j^{2k-2}\le C\|p\|_{Y_0^{k-1}},
        \end{aligned}
    \end{equation*}
    and the proof of the first part is now finished.

    On the other hand,  by applying  \eqref{4-1}, we have $\partial_{g_j} G_i(0, \boldsymbol{\rho}, \boldsymbol{x}, \mathbf{0}) h=0, j \neq i$. Therefore, we find
    \begin{equation}\label{diag}
        \partial_{\boldsymbol{g}} \boldsymbol{G}\left(0, \boldsymbol{\rho},\boldsymbol{x}, 0\right)=\operatorname{diag}\left(\partial_{g_1} G_1\left(0, \boldsymbol{\rho},\boldsymbol{x},0\right), \cdots, \partial_{g_m} G_m\left(0, \boldsymbol{\rho},\boldsymbol{x}, 0\right)\right)
    \end{equation}
    and hence $\partial_{\boldsymbol{g}} \boldsymbol{G}(0, \boldsymbol{\rho}, \boldsymbol{x}, \mathbf{0})$ is also an isomorphism from $(X^{k+\gamma-1})^m$ to $ (Y_0^{k-1})^m$.

    Hence, the proof is now concluded.
\end{proof}

The subsequent step involves solving the equation $F_i=0$ defined in \eqref{Gi_eq} by appropriately selecting the values of $\rho_i$.
\begin{lemma}\label{r}
  There exists a positive value $\varepsilon_0$ such that, for any $\varepsilon$ within the range $\left(-\varepsilon_0, \varepsilon_0\right)$ and for arbitrary functions $g_i$ satisfying the condition
$\norm{g_i}_{X^{k+\gamma-1}}<1$, the equation $$F_i(\varepsilon,\rho_i,g_i)=0,$$ possesses a unique solution given by
    \begin{equation*}
        \rho_i=\rho_i\left(\varepsilon, g_i\right)=\sqrt{\frac{\kappa_i}{\pi}}+\varepsilon^{2+2\gamma} \mathcal{R}\left(\varepsilon, g_i\right) .
    \end{equation*}
    In this context, $\mathcal{R}\left(\varepsilon, g_i\right)$ represents a bounded functional that is also continuously differentiable.
\end{lemma}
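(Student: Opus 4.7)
The plan is to solve the equation $F_i=0$ explicitly, exploiting the fact that $F_i$ is a quadratic polynomial in $\rho_i$. First I would isolate $\rho_i^2$ to obtain
\begin{equation*}
\rho_i^2=\frac{\kappa_i}{\pi}-\frac{\varepsilon^{2+2\gamma}}{2\pi}\int_0^{2\pi}g_i^2(\beta)\,d\beta.
\end{equation*}
Using the continuous embeddings $X^{k+\gamma-1}\hookrightarrow H^k\hookrightarrow L^2$, the $L^2$-norm of $g_i$ is uniformly controlled on $\{\|g_i\|_{X^{k+\gamma-1}}<1\}$; hence for $\varepsilon_0$ sufficiently small the right-hand side stays strictly positive and bounded away from zero, and taking the positive square root defines a solution $\rho_i=\rho_i(\varepsilon,g_i)$ lying near $\sqrt{\kappa_i/\pi}$. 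Uniqueness among positive values of $\rho_i$ follows from the strict monotonicity of $\rho_i\mapsto F_i(\varepsilon,\rho_i,g_i)$ on $(0,\infty)$.

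Next I would Taylor-expand the square root to extract the claimed leading-order behavior. Setting $\tau:=\frac{\varepsilon^{2+2\gamma}}{2\kappa_i}\|g_i\|_{L^2}^2$, which is uniformly of order $\varepsilon^{2+2\gamma}$, I write
\begin{equation*}
\rho_i(\varepsilon,g_i)=\sqrt{\kappa_i/\pi}\,(1-\tau)^{1/2}=\sqrt{\kappa_i/\pi}-\sqrt{\kappa_i/\pi}\int_0^1\frac{\tau}{2(1-s\tau)^{1/2}}\,ds,
\end{equation*}
which rearranges to $\rho_i=\sqrt{\kappa_i/\pi}+\varepsilon^{2+2\gamma}\mathcal{R}(\varepsilon,g_i)$ for an explicit $\mathcal{R}$ proportional to $-\|g_i\|_{L^2}^2\int_0^1(1-s\tau)^{-1/2}\,ds$.

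To finish, I would verify that $\mathcal{R}$ is bounded and $C^1$ as a functional of $(\varepsilon,g_i)\in(-\varepsilon_0,\varepsilon_0)\times V_\mu$. This reduces to three elementary observations: the map $g_i\mapsto\|g_i\|_{L^2}^2$ is a continuous quadratic form on $X^{k+\gamma-1}$ (hence $C^\infty$ by the Sobolev embedding), the function $\tau\mapsto(1-\tau)^{-1/2}$ is smooth on a neighborhood of $0$, and since $\gamma\in(1,2)$ the scalar factor $\varepsilon\mapsto\varepsilon^{2+2\gamma}$ is even $C^4$. There is no genuine obstacle here: unlike the $G_i$ analysis of the previous lemmas, $F_i$ contains no singular integrals, so the only thing to watch is choosing $\varepsilon_0$ small enough to keep the radicand uniformly away from zero. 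Equivalently, one could invoke the implicit function theorem at $(\varepsilon,\rho_i,g_i)=(0,\sqrt{\kappa_i/\pi},0)$, where $\partial_{\rho_i}F_i=2\pi\sqrt{\kappa_i/\pi}\neq 0$, obtaining the same conclusion with the regularity of $\mathcal{R}$ built in.
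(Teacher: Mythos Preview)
Your proposal is correct and follows essentially the same approach as the paper: solve $F_i=0$ explicitly for $\rho_i$ as the positive square root, then Taylor-expand around $\sqrt{\kappa_i/\pi}$ to extract the $\varepsilon^{2+2\gamma}$ factor. If anything, your version is more complete than the paper's, which simply writes the binomial expansion of $(1+z)^{1/2}$ and reads off the conclusion without spelling out the boundedness, uniqueness, or $C^1$ regularity arguments that you supply.
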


\begin{proof}
First, note that for $\abs{z}<1$
    \begin{equation}
        \begin{split}
            (1+z)^{1/2}&=\sum_{n=0}^{\infty}\binom{1/2}{n}z^n\\
            &
            =1+\frac {z}{2}+\frac {1/2 (1/2-1)}{2!}z^2+
\cdots +\frac {1/2 (1/2-1)\cdots (1/2-n+1)}{n!}z^n+\cdots
        \end{split}
    \end{equation}
Now, for $\frac{\kappa_i}{\pi}\neq 0$,   the expansion is given by
$$\left(\frac{\kappa_i}{\pi}- \frac{\varepsilon^{2+2\gamma}}{2\pi} \int_0^{2 \pi} g_i^2(\beta) d \beta\right)^{1/2}=r(1+z)^{1/2}=\sqrt{\frac{\kappa_i}{\pi}}(1+z)^{1/2}$$
with $z=-\varepsilon^{2+2\gamma}\frac{b }{2\kappa_i} \int_0^{2 \pi} g_i^2(\beta) d \beta$
and where $r$ is the principal square root of $\frac{\kappa_i}{\pi}$. Since we are dealing with real numbers and $\frac{\kappa_i}{\pi}>0$ then we have $r=\sqrt{\frac{\kappa_i}{\pi}}$.

Hence, it is straightforward to observe that equation (\ref{Gi_eq}) is equivalent to
    \begin{equation*}
        \rho_i\left(\varepsilon, g_i\right)=\sqrt{\frac{\kappa_i}{\pi}-\frac{\varepsilon^{2+2\gamma}}{2\pi} \int_0^{2 \pi} g_i^2(\beta) d \beta}=\sqrt{\frac{\kappa_i}{\pi}}+\varepsilon^{2+2\gamma} \mathcal{R}\left(\varepsilon, g_i\right).
    \end{equation*}
\end{proof}
\section{Existence of vortex patches}\label{section5}

Within this section, taking inspiration from the classical Crandall-Rabinowitz theorem within the realm of bifurcation theory. We construct a family of time-periodic vortex patch solution to \eqref{1-1} with $\gamma\in(1,2)$, for any arbitrarily chosen small value of $\varepsilon$  by  applying implicit function theorem.

Define $\boldsymbol{\rho}(\varepsilon, \boldsymbol{g}):=\left(\rho_1\left(\varepsilon, g_1\right), \ldots, \rho_m\left(\varepsilon, g_m\right)\right)$, where $\rho_i\left(\varepsilon, g_i\right)$ is determined by Lemma \ref{r}. Now, define the new functional $\mathcal{G}$ as follows
 \begin{equation*}
    \mathcal{G}(\varepsilon, \boldsymbol{x}, \boldsymbol{g})=\boldsymbol{G}(\varepsilon, \boldsymbol{\rho}(\varepsilon, \boldsymbol{g}), \boldsymbol{x}, \boldsymbol{g}).
\end{equation*}

In order to apply the implicit function theorem, it is essential to ensure that $\mathcal{G}$ maps a suitable subset of $(X^{k+\gamma-1})^m$ into $(Y_0^k)^m$. This goal can be achieved by making a proper choice for $\boldsymbol{x}.$ In fact, by letting
\begin{equation*}
    V_1:=\left\{\boldsymbol{g} \in (X^{k+\gamma-1})^m \mid \sum_{j=1}^m\left\|g_i\right\|_{H^{k}}<1\right\} \subset (X^{k+\gamma-1})^m
\end{equation*}
be the unit ball, and with this choice, we can establish the following crucial lemma.

\begin{lemma}\label{zeros}
   The requirement that $\mathcal{G}$ maps $\left(-\varepsilon_0, \varepsilon_0\right) \times B_{\rho_0}\left(\boldsymbol{x}_0\right) \times V_1$ into $(Y_0^{k-1})^m$ can be alternatively expressed as a system of $2 m$ equations of the following type
    \begin{equation}\label{w}
        \nabla \mathcal{W}_m(\boldsymbol{x})=O(\varepsilon).
    \end{equation}
   In this context, the notation $O(\varepsilon)$ represents a term that exhibits $C^1$ smoothness and is uniformly bounded by $C \varepsilon$, where the constant $C$ is independent of both $\varepsilon$ and $\boldsymbol{g}$
\end{lemma}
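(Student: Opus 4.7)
The plan is to observe that $\mathcal{G}(\varepsilon,\boldsymbol{x},\boldsymbol{g})$ taking values in $(Y_0^{k-1})^m$ rather than merely in $(Y^{k-1})^m$ is equivalent to the vanishing of the $\sin\beta$ and $\cos\beta$ Fourier coefficients of each component $\mathcal{G}_i$, since $Y_0^{k-1}=Y^{k-1}/\operatorname{span}\{\sin\beta,\cos\beta\}$. This produces exactly $2m$ real scalar equations, one pair per index $i$. I will then show that at leading order these equations reduce to $\nabla\mathcal{W}_m(\boldsymbol{x})=0$, with a $C^1$-smooth remainder of order $O(\varepsilon)$.

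Concretely, I would substitute $\boldsymbol{\rho}=\boldsymbol{\rho}(\varepsilon,\boldsymbol{g})=\sqrt{\boldsymbol{\kappa}/\pi}+O(\varepsilon^{2+2\gamma})$ from Lemma \ref{r} into the expansion \eqref{3-7} and examine each piece. At $\varepsilon=0$, $G_{i,1}$ is linear in $g_i$ and coincides with the operator $\partial_{g_i}G_i(0,\boldsymbol{\rho},\boldsymbol{x},\mathbf{0})$ of Lemma \ref{iso}, whose range is $Y_0^{k-1}$; thus $G_{i,1}$ contributes nothing to the $\sin\beta,\cos\beta$ modes. In contrast, the leading parts of $G_{i,2}$ and $G_{i,3}$ are already of the form $(\sin\beta,-\cos\beta)\cdot w_i(\boldsymbol{x})$ with
\begin{equation*}
w_i(\boldsymbol{x}):=\sum_{j\neq i}\kappa_j\,\frac{\gamma C_\gamma(x_i-x_j)}{2\pi|x_i-x_j|^{\gamma+2}}+\sum_{j=1}^{m}\kappa_j\,\nabla_x K_\gamma^0(x_i,x_j)\in\mathbb{R}^2.
\end{equation*}
Hence at $\varepsilon=0,\boldsymbol{g}=\mathbf{0}$ the functional $\boldsymbol{G}$ collapses to $(\sin\beta,-\cos\beta)\cdot w_i(\boldsymbol{x})$, and Lemma \ref{equivalence} identifies the vanishing of this expression with $\nabla_{x_i}\mathcal{W}_m(\boldsymbol{x})=0$; a direct differentiation of \eqref{1-5} using $\nabla_x(C_\gamma|x-y|^{-\gamma})=-\gamma C_\gamma(x-y)/|x-y|^{\gamma+2}$ together with the symmetry of $K_\gamma^0$ gives in fact the stronger pointwise identity $w_i(\boldsymbol{x})=c_i\,\nabla_{x_i}\mathcal{W}_m(\boldsymbol{x})$ for some nonzero constant $c_i$.

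It remains to absorb the $\varepsilon$-dependent corrections coming from the substitution $\boldsymbol{\rho}=\boldsymbol{\rho}(\varepsilon,\boldsymbol{g})$ (which is $C^1$ in its arguments by Lemma \ref{r}) and from the remainders $\varepsilon|\varepsilon|^\gamma\mathcal{R}_{i,1}$, $\varepsilon\mathcal{R}_{i,2}$, $\varepsilon\mathcal{R}_{i,3}$ in \eqref{3-7} (which are $C^1$-bounded by Lemma \ref{lem2-3}). Projecting these onto $\operatorname{span}\{\sin\beta,\cos\beta\}$ yields a $C^1$-smooth correction $\widetilde{\mathcal{R}}_i(\varepsilon,\boldsymbol{x},\boldsymbol{g})$ uniformly bounded by $C\varepsilon$, so that the $2m$ vanishing conditions become $c_i\,\nabla_{x_i}\mathcal{W}_m(\boldsymbol{x})+\varepsilon\widetilde{\mathcal{R}}_i=0$, which is precisely \eqref{w}.

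The main technical subtlety is not any individual analytic step but the bookkeeping required to isolate the $\sin\beta,\cos\beta$ projections cleanly: confirming that $G_{i,1}$ never injects a $j=1$ Fourier mode for $g_i\in X^{k+\gamma-1}$ (which is exactly the range statement of Lemma \ref{iso}), and that all the $\varepsilon$-expansions of $G_{i,2}$ and $G_{i,3}$ collect into a single $C^1$-bounded $O(\varepsilon)$ remainder. Once these are in hand, the equivalence claimed in the lemma follows from the identification of the leading $2m$-dimensional vector with $\nabla\mathcal{W}_m(\boldsymbol{x})$ via Lemma \ref{equivalence}.
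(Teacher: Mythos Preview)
Your proposal is correct and takes essentially the same approach as the paper: reduce membership in $(Y_0^{k-1})^m$ to the vanishing of the $\sin\beta$ and $\cos\beta$ Fourier coefficients of each $G_i$, use the expansion \eqref{3-7} together with Lemma~\ref{iso} (to see that the leading part of $G_{i,1}$ contributes no $j=1$ mode) and Lemma~\ref{r}, and identify the resulting $2m$ scalar equations with $\nabla\mathcal{W}_m(\boldsymbol{x})=O(\varepsilon)$ via the definition of $\mathcal{W}_m$. The paper's version is terser---it simply computes the two projections \eqref{fff}--\eqref{ffff} and invokes the definition of $\mathcal{W}_m$---but your more explicit bookkeeping of why $G_{i,1}$ drops out and how the $C^1$ remainders collect is the same argument.
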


\begin{proof}
    For any given $i=1, \ldots, m$, we choose $\boldsymbol{g} \in V_1$ such that
    \begin{equation*}
    g_i(\beta)=\sum_{j=2}^{\infty}\left(a_j\cos(j\beta)+b_j\sin(j\beta)\right).
    \end{equation*}
    According to the definition of $(Y_0^{k-1})^m$, to guarantee that $\mathcal{G}(\varepsilon, \boldsymbol{x}, \boldsymbol{g})=\boldsymbol{G}(\varepsilon, \boldsymbol{\rho}(\varepsilon, \boldsymbol{g}), \boldsymbol{x}, \boldsymbol{g}) \in (Y_0^{k-1})^m$ is equivalent to prove that  $\mathcal{G}_i(\varepsilon, \boldsymbol{x}, \boldsymbol{g})=G_i(\varepsilon, \boldsymbol{\rho}(\varepsilon, \boldsymbol{g}), \boldsymbol{x}, \boldsymbol{g}) \in Y_0^{k-1}$, for all $i=1,\cdots,m$. Hence, we must ensure that the following equations are satisfied
    \begin{equation}\label{ff}
        \begin{aligned}
            & \int_0^{2 \pi} G_i(\varepsilon,\boldsymbol{\rho}(\varepsilon, \boldsymbol{g}),\boldsymbol{x},\boldsymbol{g})\sin(\beta) d\beta=0, \\
            & \int_0^{2 \pi} G_i(\varepsilon, \boldsymbol{\rho}(\varepsilon, \boldsymbol{g}), \boldsymbol{x}, \boldsymbol{g}) \cos (\beta) d \beta=0,
        \end{aligned}
    \end{equation}
    where $i=1, \ldots, m$. Using equation \eqref{3-7}, along with the calculations from Lemma \ref{iso} and Lemma \ref{r}, we deduce
    \begin{equation}
        \begin{aligned}\label{fff}
            & \int_0^{2 \pi} G_i(\varepsilon, \boldsymbol{\rho}(\varepsilon, \boldsymbol{g}), \boldsymbol{x}, \boldsymbol{g}) \sin (\beta) d \beta \\
            = & \sum_{j \neq i} \kappa_j \frac{\gamma C_\gamma\left(x_i-x_j\right) \cdot e_1}{2 \pi\left|x_i-x_j\right|^{\gamma+2}}+\sum_{j=1}^m \kappa_j e_1 \cdot \nabla_x K_\gamma^0\left(x_i, x_j\right)+O(\varepsilon)
        \end{aligned}
    \end{equation}
    and
    \begin{equation}\label{ffff}
        \begin{aligned}
            & \int_0^{2 \pi} G_i(\varepsilon,\boldsymbol{\rho}(\varepsilon, \boldsymbol{g}), \boldsymbol{x}, \boldsymbol{g}) \cos (\beta) d \beta \\
            = & -\sum_{j \neq i} \kappa_j \frac{\gamma C_\gamma\left(x_i-x_j\right) \cdot e_2}{2 \pi\left|x_i-x_j\right|^{\gamma+2}}+\sum_{j=1}^m \kappa_j e_2 \cdot \nabla_x K_\gamma^0\left(x_i, x_j\right)+O(\varepsilon) ,
        \end{aligned}
    \end{equation}
    with $e_1=(1,0)$ and $e_2=(0,1)$, the equations $\eqref{fff}$ and $\eqref{ffff}$ can be understood as being equivalent to equation $\eqref{w}$, thanks to the definition of $\mathcal{W}_m$.
\end{proof}

With all the necessary preparations in the preceding lemmas, we are now in a position to prove our main theorem.

\begin{proof}[Proof of Theorem \ref{thm1}]
    Given the non-degeneracy condition $\operatorname{deg}\left(\nabla \mathcal{W}_{m}, \boldsymbol{x}_0\right) \neq 0$, equation \eqref{w} possesses a unique solution near $\boldsymbol{x}_0$ for sufficiently small $\varepsilon$, as established by the implicit function theorem. We proceed to solve \eqref{w} and express the solution $\boldsymbol{x}_{\varepsilon}$ in the form $\boldsymbol{x}_{\varepsilon}=\boldsymbol{x}_0+\varepsilon \mathcal{R}_{\boldsymbol{x}}(\varepsilon, \boldsymbol{g})$. We know that $\mathcal{R}_{\boldsymbol{x}}$ defined on $\left(-\varepsilon_0, \varepsilon_0\right) \times V_1$ is at least $C^1$ due to the regularity of the functional $\boldsymbol{G}=\left(G_1, \ldots, G_m\right)$.

    Now, we introduce the following notation
    \begin{equation*}
        \bar{G}_i(\varepsilon, \boldsymbol{g}):=G_i\left(\varepsilon, \boldsymbol{\rho}(\varepsilon, \boldsymbol{g}), \boldsymbol{x}_0+\varepsilon \mathcal{R}_{\boldsymbol{x}}(\varepsilon, \boldsymbol{g}), \boldsymbol{g}\right)
    \end{equation*}
    and define
    \begin{equation*}
        \overline{\boldsymbol{\mathcal{G}}}(\varepsilon, \boldsymbol{g})=\left(\bar{G}_1(\varepsilon, \boldsymbol{g}), \ldots, \bar{G}_m(\varepsilon, \boldsymbol{g})\right) .
    \end{equation*}
    We can then conclude from Lemma \ref{zeros} that $\overline{\boldsymbol{\mathcal{G}}}$ maps $\left(-\varepsilon_0, \varepsilon_0\right) \times V_1$ into $\left(Y_0^{k-1}\right)^m$. Furthermore, $\overline{\boldsymbol{\mathcal{G}}}$ is $C^1$ continuous with respect to $\boldsymbol{g}$. To proceed, we need to verify that $\partial_{\boldsymbol{g}}\overline{\boldsymbol{\mathcal{G}}}(0,\boldsymbol{0})$ is an isomorphism from $\left(X^{k+\gamma-1 }\right)^m$ to $\left(Y_0^{k-1}\right)^m$. By invoking the chain rule, we obtain
    \begin{equation*}
        \begin{aligned}
            & \partial_{g_j} \bar{G}_i(\varepsilon, \boldsymbol{g})=\partial_{g_j} G_i\left(\varepsilon, \boldsymbol{\rho}(\varepsilon, \boldsymbol{g}), \boldsymbol{x}_0+\varepsilon \mathcal{R}_{\boldsymbol{x}}(\varepsilon, \boldsymbol{g}), \boldsymbol{g}\right)+\partial_{\boldsymbol{\rho}} G_i\left(\varepsilon, \boldsymbol{\rho}(\varepsilon, \boldsymbol{g}), \boldsymbol{x}_0\right. \\
            & \left.\quad+\varepsilon \mathcal{R}_{\boldsymbol{x}}(\varepsilon, \boldsymbol{g}), \boldsymbol{g}\right) \cdot \partial_{\boldsymbol{g}_j} \boldsymbol{\rho}(\varepsilon, \boldsymbol{g})+\partial_{\boldsymbol{x}} G_i\left(\varepsilon, \boldsymbol{\rho}(\varepsilon, \boldsymbol{g}), \boldsymbol{x}_0+\varepsilon \mathcal{R}_{\boldsymbol{x}}(\varepsilon, \boldsymbol{g}), \boldsymbol{g}\right) \cdot \partial_{g_j}\left(\varepsilon \mathcal{R}_{\boldsymbol{x}}(\varepsilon, \boldsymbol{g})\right),
        \end{aligned}
    \end{equation*}
    which implies
    \begin{equation*}
        \partial_{g_j} \bar{G}_i(0,0)=\partial_{g_j} G_i\left(\varepsilon, \boldsymbol{\rho}(0,\boldsymbol{0}), \boldsymbol{x_0}, 0\right).
    \end{equation*}
    Hence, by Lemma \ref{iso}, $\partial_{\boldsymbol{g}}\overline{\boldsymbol{\mathcal{G}}}(0,\boldsymbol{0})$ is an isomorphism from $\left(X^{k+\gamma-1 }\right)^m$ to $\left(Y_0^{k-1}\right)^m$. It is important to note that Lemma \ref{zeros} implies $\overline{\boldsymbol{\mathcal{G}}}(0,\boldsymbol{0})=0$.

  Through the application of the implicit function theorem to  $\overline{\boldsymbol{\mathcal{G}}}$ at the point $(0,0)$, we can deduce the existence of $\varepsilon_0>0$. Consequently, the solution set
    \begin{equation*}
        \left\{(\varepsilon, \boldsymbol{g}) \in\left(-\varepsilon_0, \varepsilon_0\right) \times V_1 \mid \overline{\boldsymbol{\mathcal{G}}}(\varepsilon, \boldsymbol{g})=0\right\}
    \end{equation*}
    is not empty and can be parameterized by one-dimensional curve  $\varepsilon \in\left(-\varepsilon_0, \varepsilon_0\right) \rightarrow\left(\varepsilon, \boldsymbol{g}_{\varepsilon}\right)$. Consequently, we have established a family of non-trivial vortex patch solutions with fixed vorticity $\frac{1}{\varepsilon^2}$ and total flux $\kappa_i$ for each patch by using Lemma \ref{r}, fulfilling conditions $(i)$-$(iii)$ of Theorem \ref{thm1}.

    Since $(iv)$ of Theorem \ref{thm1} is trivial, to end our proof we only need to establish the convexity of the interior of $\Gamma_i$ for $i=1, \ldots, m$.  To achieve this, our objective is to compute the signed curvature $C_i(\beta)$  of the patch $\Gamma_i$ at $z_i(\beta)=(z_{i}^{1}(\beta),z_{i}^{2}(\beta))=x_{ i}+\varepsilon R_i(\beta)(\cos \beta, \sin \beta)$ with $R_i(\beta)=$ $1+\varepsilon^{1+\gamma}g_{i}(\beta)$ for  $\beta \in[0,2 \pi)$. Specifically, we have
    \begin{equation*}
        \begin{split}
            \varepsilon  C_{i}(\beta)& =\frac{\partial _{\beta\beta}z_{i}^{2}(\beta)\partial_{\beta}z_{i}^{1}(\beta)-\partial _{\beta\beta}z_{i}^{1}(\beta)\partial _{\beta}z_{i}^{2}(\beta)}{((\partial _{\beta}z_{i}^{1}(\beta))^2+(\partial_{\beta}z_{i}^{2}(\beta))^2)^{3/2}} \\
            &=\frac{(1+\varepsilon^{1+\gamma}g_{i}(\beta))^{2}+2\varepsilon^{2+2\gamma}(g_{i}^{\prime }(\beta))^{2}-\varepsilon^{1+\gamma}g_{i}^{\prime\prime }(\beta)(1+\varepsilon^{1+\gamma}g_{i}(\beta))}{\left( (1+\varepsilon^{1+\gamma}g_{i}(\beta))^{2}+\varepsilon^{2+2\gamma}(g_{i}^{\prime}(\beta))^{2}\right) ^{\frac{3}{2}}}=\frac{1+O(\varepsilon )}{1+O(\varepsilon )}>0,
        \end{split}%
    \end{equation*}
    for small $\varepsilon $ and each $\beta\in \lbrack 0,2\pi )$. The resulting quantity is non-negative if $\varepsilon \in (-\varepsilon_0,\varepsilon_0)$. Consequently, the signed curvature is strictly positive, confirming the desired result $(v)$. Therefore, the proof of Theorem \ref{thm1} is now complete.
\end{proof}

\section*{Acknowledgement}


E. Cuba has been supported by FAPESP through grant  2021/10769-6, Brazil. L. C. F. Ferreira has been partially supported by FAPESP and CNPq, Brazil.

\phantom{s} \thispagestyle{empty}

\end{document}